\newtheorem{theorem}{Theorem}
\newtheorem*{theorem*}{Theorem}
\newtheorem{lemma}[theorem]{Lemma}
\newtheorem{corollary}[theorem]{Corollary}
\newtheorem*{prop*}{Proposition}
\newtheorem{prop}{Proposition}
\newtheorem*{fact*}{Fact}
\theoremstyle{definition}
\newtheorem{question}{Question}
\newtheorem*{conj*}{Conjecture}
\newtheorem*{ex*}{Example}
\theoremstyle{remark}
\newtheorem{rem}{Remark}
\newcommand*{\claimproofname}{Proof of claim}
\DeclareMathOperator{\poly}{poly}
\DeclareMathOperator*{\E}{\mathbf{E}}
\newcommand{\go}{K}
\newcommand{\al}{\alpha}
\newcommand{\eps}{\varepsilon}
\newcommand{\om}{\omega}
\newcommand{\Om}{\Omega}
\newcommand{\supp}{\textnormal{supp}}
\newcommand{\mc}[1]{\mathcal{#1}}
\newcommand{\bs}[1]{\boldsymbol{#1}}
\renewcommand{\Pr}{\textbf{Pr}}
\newcommand{\Lesssim}[1]{\lesssim_{{\textstyle\mathstrut}{#1}}}
\DeclareMathSymbol{\shortminus}{\mathbin}{AMSa}{"39}
\theoremstyle{definition}
\numberwithin{equation}{section} 
\numberwithin{figure}{section}
\numberwithin{table}{section}
\newcommand{\bE}{\mathbf{E}}
\newcommand{\R}{\mathbf{R}}
\newcommand{\C}{\mathbf{C}}
\newcommand{\T}{\mathbf{T}}
\newcommand{\D}{\mathbf{D}}
\newcommand{\N}{\mathbf{N}}
\newcommand{\Z}{\mathbf{Z}}
\newcommand{\iu}{\mathbf{i}}
\newcommand{\x}{\bs{x}}
\newcommand{\y}{\bs{y}}
\newcommand{\z}{\bs{z}}
\newcommand{\W}{w}
\newcommand{\Y}{\mathbf{Y}}
\renewcommand{\a}{\bs{a}}
\renewcommand{\v}{\bs{v}}
\renewcommand{\b}{\bs{b}}
\DeclareMathOperator{\sgn}{\operatorname{sgn}}
\begin{document}
	
	\title[]{Dimension-free Bernstein's discretization Inequalities}
    \title{Dimension-free discretizations of the uniform norm\\by small product sets}
	
\author[L. Becker]{Lars Becker}
\address{(L.B.) Mathematical Institute, 
	University of Bonn,
	Endenicher Allee 60, 53115, Bonn,
	Germany}
\email{becker@math.uni-bonn.de}

\author[O. Klein]{Ohad Klein}
\address{(O.K.) School of Computer Science and Engineering, Hebrew University, Jerusalem}
\email{ohadkel@gmail.com}

\author[J. Slote]{Joseph Slote}
\address{(J.S.) Department of Computing and Mathematical Sciences, California Institute of Technology, Pasadena, CA}
\email{jslote@caltech.edu}

\author[A. Volberg]{Alexander Volberg}
\address{(A.V.) Department of Mathematics, MSU, 
East Lansing, MI 48823, USA, and Hausdorff Center of Mathematics, Bonn}
\email{volberg@math.msu.edu}

\author[H. Zhang]{Haonan Zhang}
\address{(H.Z.) Department of Mathematics, University of South Carolina, Columbia, SC 29208, USA}
\email{haonanzhangmath@gmail.com}
\begin{abstract}
    Let $f$ be an analytic polynomial of degree at most $K-1$.
    A classical inequality of Bernstein compares the supremum norm of $f$ over the unit circle to its supremum norm over the sampling set of the $K$-th roots of unity.
    Many extensions of this inequality exist, often understood under the umbrella of Marcinkiewicz--Zygmund-type inequalities for $L^p,1\le p\leq \infty$ norms.
    We study dimension-free extensions of these discretization inequalities in the high-dimension regime, where existing results construct sampling sets with cardinality growing with the total degree of the polynomial. 
    
    In this work we show that dimension-free discretizations are possible with sampling sets whose cardinality is independent of $\deg(f)$ and is instead governed by the maximum \emph{individual} degree of $f$; \emph{i.e.}, the largest degree of $f$ when viewed as a univariate polynomial in any coordinate.
    For example, we find that for $n$-variate analytic polynomials $f$ of degree at most $d$ and individual degree at most $K-1$, $\|f\|_{L^\infty(\mathbf{D}^n)}\leq C(X)^d\|f\|_{L^\infty(X^n)}$ for any fixed $X$ in the unit disc $\mathbf{D}$ with $|X|=K$.

    The dependence on $d$ in the constant is tight for such small sampling sets, which arise naturally for example when studying polynomials of bounded degree coming from functions on products of cyclic groups.
    As an application we obtain a proof of the cyclic group Bohnenblust--Hille inequality with an explicit constant $\mc O(\log K)^{2d}$.
\end{abstract}

	\subjclass[2020]{41A17, 41A63, 42B05, 32A08}
	
	\keywords{Bernstein discretization inequality, Marcinkiewicz--Zygmund inequality, Polynomial interpolation formula, Bohnenblust--Hille inequality}

	\thanks{
		L.B. is supported by the Collaborative Research Center 1060 funded by the
		Deutsche Forschungsgemeinschaft and the Hausdorff Center for Mathematics,
		funded by the DFG under Germany's Excellence Strategy - GZ 2047/1}
	\thanks{
		O.K. was supported in part by a grant from the Israel Science Foundation (ISF Grant No. 1774/20), and by a grant from the US-Israel Binational Science Foundation and the US National Science Foundation (BSF-NSF Grant No. 2020643}
	\thanks{
		J.S. is supported by Chris Umans' Simons Institute Investigator Grant.}
	\thanks{
		A.V. is supported by NSF DMS-1900286, DMS-2154402 and by Hausdorff Center for Mathematics.}

	\maketitle

\section{Introduction and motivation}
Discretization inequalities control the $L^p$, $1\leq p\leq \infty$ norm of functions $f$ by their $L^p$ norm over some finite sampling set.
In this work we present discretization inequalities in a parameter regime motivated by harmonic analysis on high-dimensional discrete spaces. 
We sketch these motivations now; self-contained statements of our main results are in Section \ref{sect:main-results}.

Exemplary among applications of harmonic analysis to discrete spaces is harmonic analysis on the hypercube $\{-1,1\}^n$, which has led both to new proofs (and sharpenings) of classical results \cite{beckner1975inequalities,bobkov1997isoperimetric}, as well as to many discoveries in combinatorics and theoretical computer science---seminal examples of which are \cite{KKL, FriedgutJunta,FKN}; see \cite{ODbook} for more.
A guiding application for us is the recent introduction of the Bohnenblust--Hille (BH) inequality to statistical learning theory.

The classical BH inequality \cite{BH31} states that for an $n$-variate degree-$d$ analytic polynomial $f$ on the polytorus $\T^n={\{z\in\C:|z|=1\}^n}$,
\begin{equation}
\label{eq:og-BH-sketch}
\|\widehat{f}\|_{\frac{2d}{d+1}}\leq C(d)\|f\|_{\T^n}\,,
\end{equation}
for a constant $C(d)$ that, crucially, is independent of dimension $n$.
Here and throughout the norm $\|\cdot\|_X$ for a space $X$ denotes the supremum norm, and $\widehat{f}$ denotes the sequence of coefficients of $f$.
Eq. \eqref{eq:og-BH-sketch} has a long history: it generalizes Littlewood's celebrated 4/3 inequality \cite{LI30} and derives from Bohnenblust and Hille's resolution \cite{BH31} of the \textit{Bohr's strip problem} \cite{Bohr13} on the convergence of Dirichlet series $\sum_na_nn^{-s}$.
Since then, the BH inequality has become an important tool in harmonic analysis and functional analysis, for example to study Sidon constants and the asymptotic behavior of Bohr radii \cite{DFOCOS,bayart}.
We remark that in these applications it is important to have good control of the BH constants.
See the recent monograph \cite{Defant_García_Maestre_Sevilla-Peris_2019} for more discussion and applications to infinite dimensional holomorphy, probability theory, and analytic number theory.

In view of these developments, it is natural to ask for generalizations of the BH inequality to other groups, such as the discrete hypercube.
The hypercube BH inequality, which was first proved in \cite{Blei_2001} and sharpened in \cite{DMP}, gives the analogous estimate for any $f:\{\pm 1\}^n\to\R$ of degree at most $d$:
\begin{equation}
    \label{ineq:bh-sketch}
    \|\widehat{f}\|_{\frac{2d}{d+1}}\Lesssim{d}\|f\|_{\{\pm 1\}^n}.
\end{equation}
Here and in what follows, $A\lesssim_{d}B$ means $A\le C(d)B$ for a constant $C(d)>0$ depending only on $d$.
The sharp dependence on degree $d$ of the constants in \eqref{eq:og-BH-sketch} and \eqref{ineq:bh-sketch} are longstanding open problems; see \cite{DSP,DMP,Defant_García_Maestre_Sevilla-Peris_2019} for more.

Surprisingly, Eskenazis and Ivanisvili \cite{eskenazis2022learning} recently applied the hypercube BH inequality to obtain exponential improvements in a central task in statistical learning theory called \emph{low-degree learning}: find (with high probability) an $L^2$ approximation to an unknown degree-$d$ polynomial $f:\{\pm 1\}^n\to[-1,1]$ using only the data $\big\{\big(x^{(j)},f(x^{(j)})\big)\big\}_{j}$ for $x^{(j)}$'s drawn independently and uniformly from $\{-1,1\}^n$.

Low-degree learning was introduced in the seminal work of Linial, Mansour, and Nisan \cite{LMN} who gave an algorithm using $\mc O(n^d\log n)$ samples $\big(x^{(j)},f(x^{(j)})\big)$. 
The best dependence on dimension $n$ for this task remained $n^{\Omega(d)}$
until Eskenazis and Ivanisvili \cite{eskenazis2022learning}  obtained the dramatic reduction to $\mc O(\log n\cdot C^{\poly(d)})$ (one should think $d\ll n$).
They applied the hypercube BH inequality to obtain strong Fourier concentration results for bounded low-degree functions, leading to an efficient thresholding approach for estimating $f$ via its Fourier coefficients $\widehat{f}$.
Extensions of this idea to quantum learning theory, with corresponding noncommutative (or \textit{qubit}) variants of the BH inequality \eqref{ineq:bh-sketch}, swiftly followed \cite{HCP, VZ}.

To expand the reach of these applications, it is necessary to extend harmonic analysis results from the hypercube to other discrete spaces.
A primary generalization is the discrete $n$-torus,
\[\Om_K^n:=\{e^{2\pi \iu k/K}:k=0,1,\ldots, K-1\}^n\,,\]
which arises naturally as the ``hypergrid'' in property testing \cite{v010a017, BCS}, is a key setting for studying the hardness of approximation (\textit{e.g.}, the Plurality is Stablest Conjecture of \cite{KKMO}, see also \cite{MOO}), and is crucial for understanding operators on $K$-level qudits in the quantum (or noncommutative) setting \cite{SVZ}.
Already there is a wealth of applications of harmonic analysis to $\Om_K^n$ in combinatorics \cite{ALON1991203, meshulam}, number theory \cite{BakerSchmidt}, and graph theory \cite{ADFS}, for example.

All functions $f:\Om_K^n\to \C$ can be extended to analytic polynomials with individual degree (the maximum degree in any variable) at most $K-1$ using Fourier expansion.
In the spirit of generalizing the hypercube BH inequality \eqref{ineq:bh-sketch} and learning theory applications to $\Omega_K^n$, we shall pursue Fourier-type estimates for functions $f:\Om_K^n\to \C$ whose corresponding polynomials also have \emph{total} degree bounded by $d$.
Note that our applications stipulate the parameter regime $K < d \ll n$.

Attempts to prove inequalities that are standard on the hypercube ($K=2$) meet significant challenges on the discrete $n$-torus for larger $K$.
For example, if one tries to prove the BH inequality on $\Om_K^n$ (even for $K=3$) by repeating the argument that worked for both the polytorus and the hypercube, one quickly encounters trouble, as detailed in \cite[Appendix A]{SVZbh}.

We now sketch a difficulty appearing already for a simpler estimate:
bounding spectral projections of low-degree polynomials.
Concretely, given a polynomial $f:\Om_K^n\to \C$ of total degree at most $d$ and individual degree at most $K-1$, we seek to control its degree-$\ell$ homogeneous part $f_\ell$ as follows:
\begin{equation}
\label{ineq:spectral-proj-sketch}
	\|f_\ell\|_{\Om_K^n}\leq C(d,K)\|f\|_{\Om_K^n}\,.
\end{equation}
This sort of estimate is a typical dimension-free inequality in harmonic analysis.
When the domain is the polytorus ($K=\infty$) this comparison is a trivial Cauchy estimate and bears constant 1.
For the hypercube ($K=2$), this estimate, usually attributed to Figiel \cite[\S 14.6]{milmanSchechtman}, comes fairly easily as well: given $f:\Omega_2^n\to \R$, $\deg(f)\leq d$, and with $x^*\in\Omega_2^n$ a maximizer of $|f_\ell|$, one considers the polynomial $Q(t):=f(tx_1^*,\ldots, tx_n^*)$ for $t\in[-1,1]$ ($f$ is extended to $[-1,1]^n$ as a multiaffine function).
A Markov--Bernstein-type estimate gives
\[\|f_\ell\|_{\Om_2^n}=\frac{|Q^{(\ell)}(0)|}{\ell!}\leq C(d,\ell)\|Q\|_{[-1,1]}\leq C(d,\ell)\|f\|_{[-1,1]^n},\]
with optimal constant $C(d,\ell)\leq(1+\sqrt2)^d$  \cite[Lemma 1.3 (4)]{DMP}.
The final step is to recognize that $\|f\|_{[-1,1]^n}=\|f\|_{\{-1,1\}^n}$ because the extension of $f$ to $[-1,1]^n$ is affine in each coordinate.

However, as soon as $K=3$ it is quite unclear how to proceed.
For example, one could analogize the argument from above, constructing a polynomial $Q(t)$ with $t$ now in the disk $\D$, to obtain via a Cauchy estimate for holomorphic functions
\[\|f_\ell\|_{\Om_K^n}=\frac{|Q^{(\ell)}(0)|}{\ell!}\leq \|Q\|_{\D}=\|Q\|_{\T}\,.\]
Unfortunately, there is no simple way to relate $\|Q\|_{\T}$ to $\|f\|_{\Om_3^n}$.
Of course $\|Q\|_\T\leq \|f\|_{\T^n}$, but then it seems we would need a dimension-free comparison of the form
\begin{equation}
\label{ineq:sketch-comparison}
\|f\|_{\T^n}\Lesssim{d,K}\|f\|_{\Om_K^n}\,.
\end{equation}

Naive attempts to prove \eqref{ineq:sketch-comparison} lead to constants with exponential dependence on dimension $n$.
And that is for good reason, as the inequality \eqref{ineq:sketch-comparison} would be quite strong.
Not only would it give the spectral projection bound \eqref{ineq:spectral-proj-sketch},
but it would also immediately imply the Bohnenblust--Hille inequality for $\Om_K^n$ via:
\begin{equation}
\label{eq:BH-implication}
\|\widehat{f}\|_{\frac{2d}{d+1}}\;\;\overset{\eqref{eq:og-BH-sketch}}{\Lesssim{\mathrlap{d}}}\;\;\; \|f\|_{\T^n}\;\;\overset{\eqref{ineq:sketch-comparison}}{\Lesssim{\mathrlap{d,K}}}\;\;\;\; \|f\|_{\Om_K^n}\,.\end{equation}
\medskip

\noindent A proof of the inequality \eqref{ineq:sketch-comparison} and its generalizations are the subject of this work.

\subsection{\boldmath Main results: Discretizations of \texorpdfstring{$L^\infty$}{L∞} norms by small sets and a new interpolation formula}
\label{sect:main-results}

Our main result is a generalization of \eqref{ineq:sketch-comparison} where $\Omega_K$ is relaxed to any set of points of cardinality $K$ with uniform spacing in the complex unit disk.
\begin{theorem}
	\label{thm design}
	Let $n\geq 1$ and $\go \geq 2$.
	Consider $Y_n=\prod_{j=1}^nZ_j$ for sets $Z_1,Z_2,\ldots, Z_n \subset \D$ such that for all $1\leq j\leq n$ we have $|Z_j| = \go$, and denote by $\eta$ the minimum pairwise distance,
	\begin{equation}\label{uniform sparsity}
		\eta=\min_{1\le j\le n}\min_{z\neq z' \in Z_j} |z - z'| >0.
	\end{equation}
	Then for any analytic polynomial $f:\D^n\to\C$ of degree $d$ and individual degree $\go-1$,
	\begin{equation}
	\label{ineq:remez-plain}
		\|f\|_{\D^n}\leq C(d,\go)\|f\|_{Y_n}.
	\end{equation}
	Here the constant $C(d, \go) := C(d, \go,\eta)=C(\go,\eta)^d$, and $C(\go,\eta)>0$ depends only on $\go$ and $\eta$.
	
	Moreover if all $Z_j=\Om_\go=\{e^{2\pi\iu k/\go}:k=0,1,\ldots, \go-1\}$ then $C(d,\go) \le \big(\mathcal{O}(\log \go)\big)^{2d}$, which proves \eqref{ineq:sketch-comparison}.
	\end{theorem}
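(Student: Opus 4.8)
The plan has two stages: first prove the general bound \eqref{ineq:remez-plain}, and then sharpen its constant in the case $Z_j=\Om_\go$, where the symmetry makes every quantity in the argument explicit. The point of the first stage is a \emph{degree‑truncated interpolation formula}; the point of the second is that for roots of unity that formula's constant is governed by the $\mathcal O(\log\go)$ Lebesgue constant of $\Om_\go$, and nothing worse (in particular, no power of $1/\eta=\Theta(\go)$).

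For stage one I would start from plain tensor Lagrange interpolation, $f(w)=\sum_{\bs i}f(z_{\bs i})\prod_j L_{j,i_j}(w_j)$, which is exact for individual degree $\le\go-1$ but whose $\ell^1$‑cost on $\D^n$ is $\prod_j\Lambda_j$, where $\Lambda_j=\sup_{|w|\le1}\sum_i|L_{j,i}(w)|$ is the Lebesgue constant of $Z_j$ — exponential in $n$, hence useless. The remedy is to split each one‑dimensional interpolation operator into the functional extracting the constant coefficient plus a ``fluctuation'' part $R_{j,i}$ that annihilates constants, expand the resulting product over the coordinates, and observe that a term indexed by a coordinate set $S$ of fluctuations is identically zero once $|S|>d$, since a polynomial of total degree $\le d$ has no monomial supported on more than $d$ variables. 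This leaves $f(w)=\sum_{|S|\le d}\Phi_S(w)$ with each $\Phi_S$ of $\ell^1$‑cost $\le(\Lambda+1)^{|S|}\|f\|_{Y_n}$, $\Lambda=\max_j\Lambda_j$. \textbf{The main obstacle} is that bounding $|f(w)|\le\sum_{|S|\le d}|\Phi_S(w)|$ term by term reintroduces $\sum_{\ell\le d}\binom n\ell(\Lambda+1)^\ell$, which is dimension‑dependent again; to eliminate the binomial one must exploit cancellation among the $\binom n\ell$ sets $S$ of a fixed size. This is the crux, and the device I would look for is an orthogonality‑type (an $L^2$ estimate over a random choice of node indices) argument that, for the monomial structure at hand, trades the $\binom n\ell$ sets of size $\ell$ for a single factor $(\text{const})^\ell$ — after which the surviving constant is of the form $C(\go,\eta)^d$ with $C(\go,\eta)$ a \emph{bounded power}, I expect the square, of the relevant Lebesgue‑type quantity.

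For stage two, I would rerun the above with $Z_j=\Om_\go$. There the Lagrange basis is a Dirichlet‑type kernel: with $\zeta_j=e^{2\pi\iu j/\go}$ one has $L_j(w)=\dfrac{\zeta_j(w^\go-1)}{\go(w-\zeta_j)}$, so $|L_j(w)|=\dfrac{|w^\go-1|}{\go\,|w-\zeta_j|}$. Since $|w^\go-1|\le2$ on $\overline\D$ and the nodes are $\eta=2\sin(\pi/\go)$‑separated (comparable to $1/\go$), for $w$ in the arc nearest $\zeta_{j_0}$ we get $|L_j(w)|\le\min\!\big(C,\ C/(\go|w-\zeta_j|)\big)\le\min\!\big(C,\ C/|j-j_0|\big)$, and summing over $j$ gives the classical bound $\Lambda(\Om_\go)=\sup_{|w|\le1}\sum_j|L_j(w)|=\mathcal O(\log\go)$. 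One also has the exact identity $\sum_j|L_j(w)|^2=1$ for $|w|=1$, which is precisely what keeps a spurious power of $1/\eta=\Theta(\go)$ from resurfacing in the orthogonality step above. Feeding these into the constant of \eqref{ineq:remez-plain} yields $C(\go,\eta)\le\mathcal O(\log\go)^2$, hence $C(d,\go)\le\big(\mathcal O(\log\go)\big)^{2d}$; since $\|f\|_{\T^n}=\|f\|_{\D^n}$ for analytic $f$ this is exactly \eqref{ineq:sketch-comparison}, and composing it with the polytorus BH inequality \eqref{eq:og-BH-sketch} as in \eqref{eq:BH-implication} (the subexponential polytorus constant being absorbed) delivers the cyclic‑group Bohnenblust--Hille inequality with constant $\mathcal O(\log\go)^{2d}$.
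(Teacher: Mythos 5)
Your proposal is organized in a sensible way, but there is a genuine gap at precisely the place you yourself flag as ``the crux,'' and the paper's actual mechanism is quite different from what you outline.

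Your stage-one setup --- expand $f$ by tensor Lagrange interpolation, split each one-dimensional interpolation operator into a rank-one piece $\mathcal E_j$ fixing constants plus a ``fluctuation'' $\mathcal R_j$ that annihilates constants, expand $\bigotimes_j(\mathcal E_j+\mathcal R_j)$ over subsets $S\subset[n]$, and observe that $\Phi_S\equiv 0$ once $|S|>d$ because a total-degree-$d$ polynomial has no monomial supported on more than $d$ coordinates --- is correct and gives a valid identity $f=\sum_{|S|\le d}\Phi_S$. You also correctly diagnose the obstruction: triangle inequality across the roughly $\binom{n}{\ell}$ sets of each size $\ell$ reintroduces a factor $n^{\Theta(d)}$. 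But the resolution you propose --- ``an orthogonality-type ($L^2$ estimate over a random choice of node indices) argument that trades the $\binom n\ell$ sets of size $\ell$ for a single factor $(\mathrm{const})^\ell$'' --- is not specified and does not clearly exist in this framework. The terms $\Phi_S(w)$ are complex numbers evaluated at a fixed $w\in\D^n$; the target is an $L^\infty$ bound, and there is no Hilbert-space structure that would let $L^2$-orthogonality of the $\Phi_S$ deliver the $\ell^1$-type cancellation you need. The identity $\sum_j|L_j(w)|^2=1$ for $w\in\T$ is true and pleasant, but it does not by itself collapse the combinatorial sum over $S$. So the proof stalls exactly where you anticipated it might, and stage two cannot be run since it inherits the unresolved constant from stage one.

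The paper avoids the subset expansion entirely. The actual mechanism, once the one-dimensional identity $\mathbf E[r(U)W^{(x)}(U)^\alpha]=x^\alpha/D$ is in hand (this is the probabilistic encoding of the one-variable interpolation, and $D$ is comparable to your Lebesgue constant), is a random \emph{compression} map $P:[n]\to[m]$ with i.i.d.\ uniform values. One uses $n$ correlated samples $W^{(x_j)}(U_{P(j)})$ that recycle only $m$ independent seeds, and Proposition~\ref{thm:key identity} shows
\begin{equation*}
f(\x)=D^m\,\mathbf E_{U,P}\Big[\textstyle\prod_{\ell=1}^m r(U_\ell)\cdot f\big(W^{(\x)}(U_P)\big)\Big]+p_{f,\x,\y}(1/m),
\end{equation*}
where the error $p_{f,\x,\y}$ is a polynomial in $1/m$ of degree $\le d-1$ with $p(0)=0$, independent of the particular $m\ge d$. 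The dimension-free constant then comes from a Vandermonde linear combination over $m=d,\dots,2d-1$ that annihilates the error exactly (system~\eqref{sys}), yielding $\sum_j|a_j|D^{m_j}\le C(\go,\eta)^d$. This random-compression-plus-Vandermonde step is the concrete device that replaces your hoped-for ``orthogonality,'' and it is quite different from anything available in the Lagrange/fluctuation expansion. Your $\mathcal O(\log\go)$ Lebesgue-constant computation for $\Omega_\go$ is essentially Lemma~\ref{lem:dft} and is fine, but it only sharpens a constant whose existence your argument has not yet established.
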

    
\noindent The term \emph{individual degree} here means the maximum degree of $f$ viewed as a univariate polynomial in variable $z_j$, $1\leq j\leq n$.
That is, we say $f$ is of degree at most $d$ and individual degree at most $\go-1$ if it can be expressed as 
\begin{equation}\label{eq:polynomial of degree d and local degree N-1}
		f(\z)=\sum_{\bm\alpha\in \{0,1,\dots, \go-1\}^n:|\bm\alpha|\le d}\widehat{f}(\bm\alpha)\z^{\bm\alpha},\qquad \widehat{f}(\bm\alpha)\in \C.
	\end{equation}
The monomial notation in \eqref{eq:polynomial of degree d and local degree N-1} will be used throughout: for any multi-index $\bm\alpha=(\alpha_1,\dots,\alpha_n)\in \Z_{\geq 0}^{n}$ and point $\z=(z_1,\dots, z_n)\in \C^n$ we define
\begin{equation*}
	 |\bm\alpha|:=\sum_{1\le j\le n}\alpha_j,\qquad \textnormal{and} \qquad \z^{\bm\alpha}:=z_1^{\alpha_1}\cdots z_n^{\alpha_n}.
\end{equation*}
Note also that $\eta$ is not completely independent of $K$; there is an upper bound on what $\eta$ is possible because all points are constrained to be in $\D$. 

\begin{rem}
    Theorem \ref{thm design} holds in the real category as well: specialize the inequality to polynomials with real coefficients and choose $Y_n\subset [-1,1]^n$ to be a real sampling set.
    See also Corollary \ref{thm: remezR}.
\end{rem}

In fact, Theorem \ref{thm design} comes as an immediate consequence of a novel polynomial interpolation formula.

\begin{theorem}
    \label{thm:interpolation}
    Let $Y_n$ and $\eta$ be as in Theorem \ref{thm design}.
    Then for all $\bs z\in \D^n$, there exist coefficients $\{c_{\bs \xi}^{(\bs z)}\}_{\bs \xi\in Y_n}\subset \C$ such that for any degree-$d$ polynomial $f$,
    \begin{equation}
    \label{eq:interpthm}
    f(\bs z)=\sum_{\bs \xi \in Y_n}c_{\bs \xi}^{(\bs z)}f(\bs \xi)\,
    \end{equation}
    with $\sum_{\bs \xi}|c_{\bs \xi}^{(\bs z)}|\leq C(d,K)$ for a constant $C(d,K) := C(d,K,\eta)= C(K,\eta)^d$ independent of dimension $n$.
    When $Y_n = \Omega_K^n$, the explicit bound $C(d,K)\leq \big(\mc O(\log K)\big)^{2d}$ holds.
\end{theorem}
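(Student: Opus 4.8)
The plan is to prove Theorem~\ref{thm:interpolation} by constructing the coefficients $c_{\bs\xi}^{(\bs z)}$ through a recursion on the \emph{total} degree $d$: the guiding heuristic is that one unit of total degree should be removable at the price of one one-dimensional interpolation, so that after $d$ rounds only a constant remains, which is reproduced for free. The one-dimensional building block I would isolate first is a \emph{degree-lowering Lagrange identity}: for $Z\subset\D$ with $|Z|=\go$ and pairwise gaps $\geq\eta$, any base node $p\in Z$, and any $z\in\D$, there are weights $\{w_\xi(z)\}_{\xi\in Z}$ with $\sum_\xi|w_\xi(z)|\leq C(\go,\eta)$ such that every univariate polynomial $g$ of degree $\leq\go-1$ satisfies
\[
 g(z)=g(p)+(z-p)\sum_{\xi\in Z}w_\xi(z)\,g(\xi),
\]
and moreover $z\mapsto\sum_\xi w_\xi(z)g(\xi)$ is a polynomial of degree $\leq\go-2$, namely $\tfrac{g(z)-g(p)}{z-p}$. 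For $Z=\Om_\go$ one should be able to improve the bound to $\sum_\xi|w_\xi(z)|\lesssim\log\go$ provided $p$ is chosen far from $z$, using the classical fact that the Lebesgue constant of $\go$ equispaced points on the circle is $\Theta(\log\go)$ together with the cancellation supplied by the prefactor $z-p$ when $z$ is near a node.

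Applying this identity in the first coordinate of an $n$-variate $f$ of total degree $\leq d$ and individual degree $\leq\go-1$ gives
\[
 f(\bs z)=f(\bs z)\big|_{z_1=p_1}+(z_1-p_1)\,L_1(\bs z),\qquad L_1(\bs z)=\sum_{\xi_1\in Z_1}w_{\xi_1}(z_1)\,f(\xi_1,z_2,\dots,z_n),
\]
where—and this is the crucial cancellation—$L_1(\bs z)=\tfrac{f(\bs z)-f(\bs z)|_{z_1=p_1}}{z_1-p_1}$ has total degree $\leq d-1$, while $f|_{z_1=p_1}$ still has total degree $\leq d$ but lives in the remaining $n-1$ variables. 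Sweeping the pinning through all coordinates yields
\[
 f(\bs z)=f(\bs p)+\sum_{j=1}^n(z_j-p_j)\,L_j(\bs z),
\]
with each $L_j$ of total degree $\leq d-1$, depending only on $z_j,\dots,z_n$ (the earlier coordinates pinned), and expressible—via the displayed polynomial identity evaluated at nodes—as an $\ell_1$-controlled combination of values of $f$ on $Y_n$. Recursing on each $L_j$ and terminating at constants produces an explicit formula $f(\bs z)=\sum_{\bs\xi\in Y_n}c_{\bs\xi}^{(\bs z)}f(\bs\xi)$, the coefficient $c_{\bs\xi}^{(\bs z)}$ being a sum over ``peeling histories'' (sequences of coordinate-and-lowering choices).

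The main obstacle—and the place where the paper's real work must go—is to bound $\sum_{\bs\xi}|c_{\bs\xi}^{(\bs z)}|$ by $C(\go,\eta)^d$ with \emph{no} dependence on $n$. A triangle-inequality estimate fails badly: at each of the $\leq d$ levels of the recursion the sum $\sum_j(z_j-p_j)L_j$ has $\sim n$ branches, so the number of peeling histories, and hence the naive bound, is $n^{\Theta(d)}$. I expect that overcoming this requires exploiting, simultaneously, (i) the individual-degree bound—a coordinate may be degree-lowered at most $\go-1$ times before dropping out, forcing parsimonious histories to repeat coordinates—and, more importantly, (ii) a resummation of the histories \emph{by the multiset of coordinates touched}, in which the signed contributions of the $\binom{n}{m}$ supports of a fixed size $m$ collapse. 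For $Y_n=\Om_\go^n$ this collapse should be most transparent on the Fourier side of $(\Z/\go)^n$, where the relevant reproducing kernels factor across coordinates; the resulting bound $\big(\mc O(\log\go)\big)^{2d}$ then naturally carries two factors of $\log\go$ per unit of degree, one from the one-dimensional Lebesgue constant used to sample and a second emerging from the resummation.

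As a consistency check and an alternative route, Theorem~\ref{thm:interpolation} is equivalent by linear-programming duality to the assertion $\sup\{|g(\bs z)|:g\text{ of total degree}\leq d,\ \text{individual degree}\leq\go-1,\ \|g\|_{Y_n}\leq1,\ \bs z\in\D^n\}\leq C(\go,\eta)^d$, which is Theorem~\ref{thm design}. The case $d=1$ already displays the mechanism: for $g=a_0+\sum_j a_jz_j$ one has $|a_0|\leq\|g\|_{Y_n}$, and a phase-alignment argument on $\Om_\go^n$ (choose $\bs\xi\in\Om_\go^n$ so that the $a_j\xi_j$ have nearly aligned arguments, then average over the alignment direction) gives $\sum_j|a_j|\Lesssim{\go}\|g\|_{Y_n}$ with a dimension-free constant, whence $|g(\bs z)|\leq|a_0|+\sum_j|a_j|\Lesssim{\go}\|g\|_{Y_n}$. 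Making the analogous saving at every degree, dimension-freely, is precisely the content of the theorem, and the recursion above is the vehicle I would use to do it.
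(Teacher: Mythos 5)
You correctly identify the central obstruction, and then explicitly do not resolve it. The degree-lowering recursion $f(\bs z)=f(\bs p)+\sum_{j=1}^n(z_j-p_j)L_j(\bs z)$ does reduce total degree by one in each $L_j$, but as you note the branching factor at each level is $\sim n$, so the coefficients $c_{\bs\xi}^{(\bs z)}$ produced by literally expanding the recursion have $\ell^1$ mass $n^{\Theta(d)}$. This is not merely a lossy triangle inequality: already at $d=1$, the naive peeling expresses $f$ as $f(\bs p)$ plus $n$ divided differences, each read off from a few sample points, and the resulting coefficient vector has $\ell^1$ norm $\Theta(n)$. The dimension-free bound you want is true, but it requires genuine cancellation among those $n$ branches, and the passage where you invoke ``a resummation of the histories by the multiset of coordinates touched,'' phrased as ``I expect that overcoming this requires exploiting\dots'' and ``should be most transparent on the Fourier side,'' is a conjecture, not an argument. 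Your $d=1$ phase-alignment check is correct (for $K\geq 3$), but it does not supply the mechanism for general $d$.

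The paper closes exactly this gap by a device that is structurally different from coordinate-by-coordinate peeling. Starting from the univariate Lagrange identity it builds a randomized representation $x^\alpha=D\,\bE[r(U)W^{(x)}(U)^\alpha]$ with $U$ uniform on an interval, $r$ taking values in $\{\pm1,\pm\iu\}$, and $W^{(x)}$ taking values in the sampling set. Applying this independently in all $n$ coordinates yields an exact formula but with the prefactor $D^n$. The crucial step is to \emph{hash} the $n$ coordinates into $m$ buckets via a uniformly random map $P\colon[n]\to[m]$ and reuse $U_{P(j)}$ in coordinate $j$: since every monomial of $f$ has support of size at most $d$, the hash separates the support with probability $1-O_d(1/m)$, and collisions contribute an error that, as a function of $1/m$, is an exact polynomial of degree $<d$ vanishing at $0$. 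A Vandermonde inversion over $m=d,\dots,2d-1$ produces an affine combination of these identities that annihilates the error exactly, leaving a closed interpolation formula whose coefficient $\ell^1$ norm is $\sum_{j=0}^{d-1}|a_j|D^{m_j}\lesssim d\,(4eD^2)^d$, which is dimension-free; the $\mathcal{O}(\log K)^{2d}$ bound for $\Om_K^n$ comes from sharpening $D$ to $O(\log K)$ by estimating the DFT-Lagrange weights directly. So where you propose a deterministic recursion plus an unproved resummation, the paper replaces the resummation by randomized coordinate-collapsing plus exact algebraic cancellation of the collision error. Your sketch as written does not constitute a proof of the theorem.
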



In the statement of Theorem \ref{thm:interpolation} we avoid giving explicit forms of the $c_{\bs \xi}^{(\bs z)}$'s as they are complicated (see \eqref{eq:f a_j}) and a significant portion of the proof is devoted to their development.
For now we remark they are naturally expressed in a probabilistic language, a perspective in approximation theory that has been around since Bernstein's probabilistic proof of the Weierstrass approximation theorem \cite{bernstein12} (\textit{n.b.} that Bernstein's proof gives an approximation to continuous, univariate $f$ through a probabilistic argument, whereas we find an exact interpolation formula for multivariate polynomial functions with coefficients having $\ell^1$-norm free of dimension).

\subsection{Theorems \ref{thm design} and \ref{thm:interpolation} in the context of approximation theory}
Let us situate our main results in the context of discretization inequalities in approximation theory, beginning with dimension $n=1$.
Various corollaries and extensions of Theorems \ref{thm design} and \ref{thm:interpolation}, as well as discussions of the optimality of various parameters, are deferred to Section \ref{sect:discuss}.

For $1<p<\infty$, the so-called \emph{Marcinkiewicz--Zygmund inequality} \cite[Chapter X, Theorem (7.5)]{Zygmund} states that for all analytic polynomials $f$ of degree at most $K-1$, one has 
	\begin{equation}\label{ineq:mz classical}
	C_p^{-1}\cdot\tfrac{1}{\go}\!\!\sum_{z\in \Omega_\go}|f(z)|^p
	\;\le\; \int_{\T} |f(z)|^p dz
	\;\le\; C_p\cdot\tfrac{1}{\go}\!\!\sum_{z\in \Omega_\go}|f(z)|^p.
	\end{equation}
	Here $C_p$ is a constant depending only on $p$ (independent of $\go$), and $\T=\{z\in \C:|z|=1\}$ denotes the unit circle.

The inequality \eqref{ineq:mz classical} is an example of a \emph{discretization of the $L^p$-norm}, and integral norm inequalities of this type are usually called \emph{Marcinkiewicz-type theorems}.
At the endpoint $p=\infty$ this type of inequality is often called a \emph{Bernstein-type theorem} or a \emph{discretization of the uniform norm} (see \cite{bernstein31,bernstein32} and \cite[Chapter X, Theorem (7.28)]{Zygmund}).
In our notation, the $p=\infty$ endpoint of \eqref{ineq:mz classical} reads
\begin{equation}\label{ineq:mz p=infty}
\|f\|_{\Omega_\go}\le \|f\|_{\T}\le C(\go)\|f\|_{\Omega_\go}\,.
\end{equation}
 In this $p=\infty$ case (and unlike $1<p<\infty$) we emphasize the right-hand side inequality cannot have constant independent of $K$.
 See for example \cite[Theorem 5]{OS}.
 
 We refer to surveys \cite{DPTT19,KKLT22} and references therein for more historical background about norm discretizations.
Bernstein-type discretization theorems also have some overlap with \emph{discrete Remez-type inequalities}; see \cite{Y,BY,BLMS} for more discussion.
(These are discretizations of the classical \emph{Remez inequality}, which controls the supremum norms of bounded-degree polynomials by their absolute suprema over subsets of positive measure; see for example 
\cite{remez, TikhonovYudistkii}
for more.)

Now let us return to the high-dimensional case, where Theorem \ref{thm design} can be understood as a Bernstein-type discretization inequality for bounded-degree multivariate polynomials in many dimensions $n$.
In this setting there are intricate tradeoffs between the cardinality (and structure) of the sampling set, the constant in the discretization inequality, and the function space to which the estimate applies.
Recently there has been very important progress on understanding the minimum cardinality of sampling sets when one demands a universal constant (independent from any notion of degree or dimension) in the inequality.

In \cite{KKT23}, Kashin, Konyagin, and Temlyakov give a discretization of the uniform norm that applies to any $N$-dimensional subspace of continuous functions on a compact subset of $\R^n$, achieving a universal constant $2$ with a sampling set of cardinality $9^N$.
Moreover, as the authors show, this is essentially the best possible sampling set cardinality for a Bernstein-type discretization inequality at this level of generality.

On the other hand, much smaller sampling sets---again for $L^\infty$ norm discretizations with universal constants---can be had when one fixes the function space to be polynomials of degree at most $d$.
A significant recent work along these lines is \cite{DP}.
Here Dai and Prymak resolved an important problem of Kro\'o \cite{Kroo2011} in real approximation theory by showing there are discretizations of the uniform norm for $n$-variate polynomials of (total) degree at most $d$ over any convex domain in $\R^n$, with universal constant $2$ and a sampling set of cardinality $C_n d^n$ in our notation.\footnote{\emph{N.b.}, in the notation of \cite{DP} it will be $C_d n^d$ where they used $d$ for the dimension and $n$ for the degree.}
When degree $d$ is large in comparison to dimension $n$, this cardinality $C_n d^n$ matches the dimension of the set of such polynomials, and is therefore the best possible.
(\textit{N.b.} our primary interest is in the opposite of their regime, $d\ll n$.)
 
Our motivating application to functions on $\Omega_K^n$---that is, to obtain a comparison
\[\|f\|_{\T^n}\Lesssim{d,K}\|f\|_{\Omega_K^n}\]
for analytic polynomials $f$ of individual degree at most $K-1$ and total degree at most $d$---is in some ways more demanding, and in other ways much more relaxed, than the works above.
On the one hand, the sampling set $\Omega_K^n$ is a fixed product set of small cardinality.
Existing Bernstein-type estimates do not seem to apply in the parameter regime $K<d$, which is the setting dictated by applications to harmonic analysis in the high-dimensional realm of combinatorics, computer science, and learning theory.
On the other hand, we do not require an absolute constant; indeed, as we discuss in Section \ref{sect:discuss}, dependence of the constant on degree $d$ is unavoidable under these constraints.
We also remark that even smaller sampling sets are achievable in our setting, though they are no longer product sets.
This is explained in the next section.

\section{Discussion: corollaries and aspects of optimality}
\label{sect:discuss}

We now remark on several aspects of optimality of Theorem \ref{thm design} and summarize the remaining results of this paper.

\subsection*{Sharp degree-dependence of the constant}
For simplicity we will argue with $Y_n=\Omega_\go^n$.
Consider the univariate inequality
\begin{equation}
\label{ineq:univariate-discrete}
	\|f\|_{\T}\leq C(K) \|f\|_{\Omega_\go}\,
\end{equation}
for polynomials $f$ with degree at most $K-1$.
A Lagrange interpolation argument shows the best constant in \eqref{ineq:univariate-discrete} has $C(K)>1$ for any $K\geq 3$ \cite[Appendix B]{SVZbh}.
Let $g$ be any extremizer of this inequality and put $f(\bs z)=\prod_{j=1}^{d/(K-1)}g(z_j)$, assuming $K-1$ divides $d$ for simplicity.
Then
\[\|f\|_{\T^n}=\big(C(K)\big)^{d/(K-1)}\|f\|_{Y_n}=:D(K)^d\|f\|_{Y_n}\]
which is exponential in $d$.

On the other hand, for this specific construction one may calculate that $D(K)>1$ does not grow in $K$.
It remains an interesting question to determine the optimal $K$-dependence of the constant in \eqref{ineq:remez-plain}.

\begin{question}
    What is the optimal dependence on $K$ in the constant in \eqref{ineq:remez-plain} of Theorem \ref{thm design}?
\end{question}

\medskip

\subsection*{On the cardinality of the sampling set}
\subsubsection*{Minimal cardinality of product sampling sets}
The cardinality of $Y_n$ in Theorem \ref{thm design} is optimal in the following sense.
If the sampling sets are of the \emph{product} form $Y_n=\prod_{1\le j\le n}Z_j$ and one expects \eqref{ineq:remez-plain} to hold at least for polynomials of individual degree at most $\go-1$, then each $Z_j$ must have cardinality at least $\go$ and so $Y_n$ contains at least $\go^n$ points.
If $|Y_n|$ were any smaller, there would exist a $j$ such that $Z_j$ has at most $\go-1$ points, and no such inequality can hold: the polynomial $f_j(\z):=\prod_{\xi\in Z_j}(z_j-\xi )$ is of degree at most $\go-1$ but $\|f_j\|_{Y_n}=0$.

Contrapositively, if the sampling set has a product set structure $Y_n=\prod_{j=1}^nZ_j$ with $|Z_j|=K$, then the individual degree constraint on $f$ is of course necessary.

\subsubsection*{Improvements for non-product sets}
On the other hand, if we remove the product constraint on our sampling set, we can do better.
Indeed, in Section \ref{sect:small design} we show that we may take a ``small" part of $\prod_{j=1}^nZ_j$ and retain a dimension-free constant.

\begin{theorem}
	\label{thm rest design0}
	Let $\go \geq 2$.
	Consider $\{Z_j\subset \D\}_{j\geq 1}$ a sequence of sets such that for all $j\geq 1$ we have $|Z_j| = \go$ and 
	\[
	\eta:=\min_{1\le j\le n}\min_{z\neq z' \in Z_j} |z - z'| >0.
	\]
	Then for any $\varepsilon>0$ one can find a subset $Y_n$ of size at most $C_1(d,\epsilon) (1+\varepsilon)^n$ contained in $\prod_{j=1}^n Z_j$ such that for any analytic polynomial $f:\D^n\to\C$ of degree $d$ and individual degree $\go-1$,
	\begin{equation}
		\|f\|_{\D^n}\leq C_2(d,\go,\eta,\eps)\|f\|_{Y_n},
	\end{equation}
	where  
\[
	C_1(d,\epsilon)\leq \left(\frac{d}{\varepsilon}\right)^{100d}\,.
	\]
	 Furthermore, if $0 < \varepsilon \leq 1/2$, then 
	\[
	C_2(d,\go, \eta, \epsilon)\leq \exp\Big(C_3(d, \go, \eta) \big(\varepsilon^{-1} \log(\varepsilon^{-1})\big)^d\Big)\,,
	\]
	for some constant $C_3(d, \go, \eta)$ depending on $d,\go$ and $\eta$.
	
\end{theorem}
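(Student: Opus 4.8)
The plan is to bootstrap from Theorem~\ref{thm:interpolation}, which already gives an exact interpolation formula on the full product grid $\prod_{j=1}^{n}Z_j$ with dimension‑free $\ell^1$‑weight, to a \emph{sparse} formula supported on a subset of size $\lesssim(1+\varepsilon)^n$, at the cost of an $\varepsilon$‑dependent constant. The first ingredient is a decomposition of $f$ that exploits $\deg f\le d$. For each $j$ fix a base point $z_j^{0}\in Z_j$, expand every monomial of $f$ in the shifted variables $w_j=z_j-z_j^{0}$, and collect terms as $f=\sum_{S\subseteq[n]}g_S$, where $g_S$ depends only on the coordinates in $S$ and vanishes whenever $z_j=z_j^{0}$ for some $j\in S$. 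Since each monomial of $f$ involves at most $d$ variables, only $|S|\le d$ survive; grouping by size gives $f=\sum_{k=0}^{d}f^{(k)}$ with $f^{(k)}=\sum_{|S|=k}g_S$. By the triangle inequality it suffices to produce, for each $k\le d$, a dimension‑free constant $A_k$ and an identity $f^{(k)}(\bs z)=\sum_{\bs\xi\in Y_n}c^{(\bs z)}_{\bs\xi}f(\bs\xi)$ valid for all admissible $f$ with $\sum_{\bs\xi}|c^{(\bs z)}_{\bs\xi}|\le A_k$; then $C_2=\sum_{k\le d}A_k$ works.

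The heart of the argument is constructing such a sparse formula for one level $k$. Inclusion--exclusion gives $g_S(\bs z)=\sum_{T\subseteq S}(-1)^{|S\setminus T|}f(\bs z^{(T)})$, where $\bs z^{(T)}$ has $j$th coordinate $z_j$ for $j\in T$ and $z_j^{0}$ otherwise; summing over $S\supseteq T$ of size $k$ yields the exact identity $f^{(k)}(\bs z)=\sum_{|T|\le k}(-1)^{k-|T|}\binom{n-|T|}{k-|T|}f(\bs z^{(T)})$. Each $f(\bs z^{(T)})$ depends on only $|T|\le d$ variables, so Theorem~\ref{thm:interpolation} (applied in $|T|$ variables) interpolates it over $\prod_{j\in T}Z_j$ with $\ell^1$‑weight $\le C(d,K)$, producing an exact formula supported on $\bigcup_{|T|\le d}\prod_{j\in T}(Z_j\setminus\{z_j^{0}\})$ (base coordinates elsewhere). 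However, its total weight is of order $\binom{n}{k}2^{k}C(d,K)$, which is \textbf{not} dimension‑free: this $\binom{n}{k}$ is the obstacle, and it must be absorbed into the \emph{cardinality} of $Y_n$ rather than into the constant. To do so one (i) sparsifies each $|T|$‑variable block recursively, replacing the $K^{|T|}$‑point grid $\prod_{j\in T}Z_j$ by a set of $\le C_1(d,\varepsilon')(1+\varepsilon')^{|T|}$ points for a suitably shrunk $\varepsilon'=\varepsilon'(\varepsilon,k)\le\varepsilon$, and (ii) by a probabilistic (or greedy) choice selects a sub‑collection of support patterns $T$, with renormalized weights, so that the identity persists for all degree‑$\le d$ polynomials while the total weight stays bounded uniformly in $n$. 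Any residual discrepancy from the selection is again a polynomial of degree $\le d$ in $\bs z$, hence controlled uniformly in $\bs z$ by a small multiple of $\|f\|_{\D^n}$ via Theorem~\ref{thm design}, and then absorbed on the left‑hand side.

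It remains to account for the constants. Summing over the $d+1$ levels, the sampling set $Y_n\subseteq\prod_{j=1}^{n}Z_j$ has size $\le C_1(d,\varepsilon)(1+\varepsilon)^n$: the factor $(1+\varepsilon)^n$ absorbs the $\binom{n}{\le d}$ choices of support pattern, and $C_1(d,\varepsilon)\le(d/\varepsilon)^{100d}$ is the $n$‑independent cost of the $\le d$ active coordinates, each contributing a factor polynomial in $d/\varepsilon$ from the recursive sparsification. Tracking the weights through the recursion — each nested step inflating the tolerance parameter geometrically and costing a factor $\varepsilon^{-1}\log\varepsilon^{-1}$, the logarithm coming from the Lebesgue‑function loss inside the one‑dimensional interpolation underlying Theorem~\ref{thm:interpolation} — yields
\[
C_2(d,K,\eta,\varepsilon)\le\exp\!\Big(C_3(d,K,\eta)\,(\varepsilon^{-1}\log\varepsilon^{-1})^{d}\Big)
\]
for $0<\varepsilon\le 1/2$.

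The decisive difficulty is the step of the previous paragraph labelled as the crux: turning the exact but $n$‑dependent inclusion--exclusion formula into a sparse interpolation identity on a set of size $(1+\varepsilon)^n$ whose $\ell^1$‑weight carries no trace of $n$. This is exactly where the cardinality‑versus‑constant trade‑off is realized, and it needs both the recursive use of the interpolation formula in few variables and a careful selection‑and‑renormalization argument with a uniform‑in‑$\bs z$ control on the sparsification error.
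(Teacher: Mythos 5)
Your proposal has a fatal structural flaw: the sampling set it constructs is of size polynomial in $n$, which Theorem~\ref{thm:no subexp designs} of this very paper rules out. After fixing one base point $z_j^0\in Z_j$ per coordinate and writing $f=\sum_{k\le d}f^{(k)}$ via the Möbius/inclusion--exclusion decomposition, the points you propose to sample are exactly those agreeing with $(z_1^0,\dots,z_n^0)$ outside a set $T$ with $|T|\le d$. That collection has cardinality at most $\sum_{t\le d}\binom{n}{t}(K-1)^t\sim n^d K^d$, which is \emph{sub-exponential} in $n$ --- and the subsequent sparsification you describe can only make it smaller. Theorem~\ref{thm:no subexp designs} shows already for $d=1,\ K=2$ that any valid sampling set must have size $\gtrsim C_2^n$ with $C_2>1$. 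So the ``selection-and-renormalization'' step you flag as the crux is not merely unfinished; no such step can exist, because the set you would select from is too small to carry any dimension-free discretization inequality at all. (Indeed the $\binom{n}{k}$ weight blow-up in your inclusion--exclusion identity is not an artifact to be removed; it is a symptom of the underlying impossibility.)

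The paper proceeds by a genuinely different mechanism that is worth contrasting. It partitions $[n]$ into $n'=n/k$ blocks of $k$ coordinates each, with $k\asymp\frac{d}{\varepsilon}\log\frac{d}{\varepsilon}$ chosen so that $\big((d+1)k^d\big)^{1/k}\le 1+\varepsilon$. Inside each block it does not use a base-point decomposition; instead it observes that polynomials of degree $\le d$ and individual degree $\le K-1$ in $k$ variables span a space of dimension $M=|\Lambda|\le(d+1)k^d$, and it constructs an $M$-point interpolation subset of $Z_{sk+1}\times\dots\times Z_{sk+k}$ directly. The existence of such a subset with quantitative lower bounds is obtained by applying Theorem~\ref{thm design} to the Vandermonde-type determinant $P(\Y)$ in \eqref{eq P def}, together with the identity $\int_{\T^{kM}}|P|^2=M!$ and Hadamard's determinant bound, which then feed into Cramer's rule to bound the per-block interpolation weights by $C(K,\eta)^{dM}e^{M/2}$. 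Crucially, the final sampling set is the \emph{product over blocks} of these $M$-point sets, so its cardinality is $M^{n'}\le\big((d+1)k^d\big)^{n/k}\le(1+\varepsilon)^n$ --- genuinely exponential, consistent with Theorem~\ref{thm:no subexp designs}. The discretization inequality is then proved by re-running the random map $P:[n']\to[m]$ argument from Propositions~\ref{part1}--\ref{thm:key identity}, treating each block as a single coordinate, with the per-block weight $L$ in \eqref{eq L def} replacing the one-variable quantity from \eqref{ineq:bound of sum of cj}. These are the two ideas absent from your proposal: the block-level reduction of dimension that keeps the sampling set product-structured (hence exponentially large), and the determinant/Hadamard argument furnishing quantitative block interpolation sets.
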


\subsubsection*{Sharp dependence of sampling set cardinality on dimension $n$}
On the other hand, the cardinality of $Y_n$ cannot be sub-exponential in $n$.
It suffices to prove this for $d=1$; the general $d\ge 1$ case follows immediately by definitions and the case $d=1$.

\begin{theorem}\label{thm:no subexp designs}
Suppose that the uniform norm discretization \eqref{ineq:remez-plain}  holds for sampling set $V_n\subset \D^n$ with $d=1,\go=2$; that is,
\begin{equation*}
    \|f\|_{\D^n}\le C_0\|f\|_{V_n}
\end{equation*}
holds for all multi-affine polynomials $f$ of degree $1$ with $C_0>1$ being the best constant, then $|V_n|\ge C_1 C_2^n$, where $C_1>0$ is universal and $C_2>1$ depends on $C_0$.
\end{theorem}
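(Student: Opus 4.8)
The plan is to probe the assumed discretization inequality with a family of $2^n$ affine polynomials indexed by the discrete cube, and then convert the resulting information into a covering problem. For $\eps=(\eps_1,\dots,\eps_n)\in\{-1,1\}^n$ I put
\[
f_\eps(\z):=\sum_{j=1}^n\eps_j z_j,
\]
a multi-affine polynomial of degree $1$. On $\D^n$ one has $|f_\eps(\z)|\le\sum_j|z_j|\le n$, with equality at $z_j=\eps_j$, so $\|f_\eps\|_{\D^n}=n$. The hypothesis $\|f_\eps\|_{\D^n}\le C_0\|f_\eps\|_{V_n}$ then forces, for every $\eps$, a sample point $v=v(\eps)\in V_n$ with $|f_\eps(v)|\ge n/C_0$. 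Equivalently, setting $A_v:=\{\eps\in\{-1,1\}^n:\ |f_\eps(v)|\ge n/C_0\}$ for $v\in V_n$, the family $\{A_v\}_{v\in V_n}$ covers the whole cube $\{-1,1\}^n$.

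The key step is to show that each $A_v$ occupies only an exponentially small fraction of the cube, using a sub-Gaussian tail bound. Fix $v\in V_n$ and draw $\eps$ uniformly from $\{-1,1\}^n$. Writing $v_j=a_j+\iu b_j$, both $\sum_j\eps_j a_j$ and $\sum_j\eps_j b_j$ are sums of independent symmetric terms bounded by $|a_j|$ and $|b_j|$ respectively, with $\sum_j a_j^2+\sum_j b_j^2=\sum_j|v_j|^2\le n$. If $|f_\eps(v)|\ge n/C_0$ then at least one of the two real sums exceeds $n/(\sqrt2\,C_0)$ in absolute value; since $\sum_j a_j^2\le n$ and $\sum_j b_j^2\le n$, Hoeffding's inequality gives
\[
\Pr_\eps\!\big[\,|f_\eps(v)|\ge n/C_0\,\big]\ \le\ 4\exp\!\Big(-\frac{(n/(\sqrt2\,C_0))^2}{2n}\Big)\ =\ 4\exp\!\Big(-\frac{n}{4C_0^2}\Big),
\]
so $|A_v|\le 4\exp\!\big(-n/(4C_0^2)\big)\,2^n$ for every $v\in V_n$.

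Combining the covering with this cardinality bound,
\[
2^n\ =\ \Big|\bigcup_{v\in V_n}A_v\Big|\ \le\ |V_n|\cdot 4\exp\!\Big(-\frac{n}{4C_0^2}\Big)\cdot 2^n,
\]
hence $|V_n|\ge\tfrac14\exp\!\big(n/(4C_0^2)\big)$. This is precisely $|V_n|\ge C_1C_2^n$ with the universal constant $C_1=1/4$ and $C_2=\exp\!\big(1/(4C_0^2)\big)>1$, which is finite and strictly larger than $1$ because $C_0$ is finite.

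The one point requiring care — the main obstacle — is obtaining \emph{exponential} growth in $n$ rather than merely polynomial growth. A crude second moment estimate, $\E_\eps|f_\eps(v)|^2=\sum_j|v_j|^2\le n$ followed by Markov's inequality, only bounds $|A_v|$ by $(C_0^2/n)2^n$, which yields the far weaker conclusion $|V_n|\ge n/C_0^2$. Upgrading this to an exponential bound requires the full sub-Gaussian concentration of Rademacher sums (equivalently, Khintchine-type moment bounds of every order), which is exactly what the Hoeffding step supplies; the rest of the argument — the choice of test functions, the split into real and imaginary parts, and the union bound — is routine. Structurally this is the standard lower bound on the size of a set that is $L^\infty$-norming for the complex $\ell^1$-ball in $\C^n$.
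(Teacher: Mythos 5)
Your proof is correct and follows essentially the same route as the paper's: the same test functions $f_{\bm\eps}(\z)=\sum_j\eps_j z_j$, the same covering of the discrete cube $\{-1,1\}^n$ by the exceptional sets $A_v$, and the same Hoeffding concentration bound applied separately to the real and imaginary parts of $\sum_j v_j\eps_j$. The only (cosmetic) differences are the threshold normalization (you keep $n/C_0$ directly where the paper sets $\delta=1/(2C)$ and works with $2\delta n$) and a slightly tighter split of the complex sum into components; the resulting exponential lower bound $|V_n|\ge\tfrac14 e^{n/(4C_0^2)}$ is of exactly the same form as the paper's $|V_n|\ge\tfrac14 e^{\delta^2 n/2}$.
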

\noindent See Section \ref{sect:sub-exponential} for the proof of Theorem \ref{thm:no subexp designs}.

\subsection*{Uniform separation}%
In Theorem \ref{thm design}, the constant $C(K,\eta)^d$ grows with $\eta^{-1}$, where $\eta$ is the minimum pairwise distance between points in the $Z_j$'s.
In fact, this is unavoidable; uniform separation (\emph{i.e.,} independence of $\eta$ from $n$) is \emph{required} to retain the dimension-freeness of the inequality of Theorem \ref{thm design}.
This is easy to see in one dimension, nor can it be avoided in higher dimensions, as illustrated by the following example.

Suppose $Y_1,Y_2,\ldots $ is a sequence of sets with $Y_n\subset\D^n$ and $c(n)$ is a sequence of coordinates; that is, $1\leq c(n)\leq n$ for all $n$.
Let $P_n=\{z_{c(n)}:\bm z\in Y_n\}\subset \D$ be the projection of $Y_n$ onto the $c(n)$-th coordinate.
Suppose $|P_n|=\go$ for all $n$ and 
\[\lim_{n\to\infty} \min_{z\neq z'\in P_n}|z-z'|=0\,.\]
For each $P_n$ we may then choose a subset $A_n\subset P_n$ with $|A_n|=\go-1$ and an excluded point $\zeta_n^*$ such that $P_n=A_n\sqcup \zeta_n^*$ and
\[\min_{\zeta\in A_n}|\zeta_n^*-\zeta|\leq \eps_n,\]
where $\lim_{n\to\infty}\eps_n=0$. 
Now consider the sequence of polynomials
\[f_n(\z) := \textstyle\prod_{\zeta\in A_n}(z_{c(n)}-\zeta)\,.\]
Certainly $\|f_n\|_{\D^n}$ is at least as large as any of its coefficients, so we have $\|f_n\|_{\D^n}\geq 1$.
On the other hand, $\|f_n\|_{Y_n}$ is very small: $f_n(\bm z)=0$ for all $\bm z\in Y_n$ except those $\bm z$ with $z_{c(n)}=\zeta^*_n$, and for these $\z$ we have
\[|f_n(\ldots, \zeta_n^*,\ldots )|=\big|\textstyle\prod_{\zeta\in A_n}(\zeta_n^*-\zeta)\big|\leq \eps_n\cdot 2^{\go-2},\]
which tends to $0$ as $n\to \infty$.
Therefore no dimension-free uniform norm discretization \eqref{ineq:remez-plain} is available for such $(Y_n)_{n\geq 1}$.
\medskip

\subsection*{Proof ideas of the main result}

Our approach is probabilistic and we sketch it here with $Y_n =\Omega_\go^n$ for simplicity.
In one coordinate, polynomial interpolation admits a probabilistic interpretation of the form
\begin{equation}
\label{eq:idea-one-coord}
f(z)=D\cdot \E [R\cdot f(W)],
\end{equation}
where $D=D(K)>1$ is a constant and $R$ and $W$ are correlated random variables taking values in $\Omega_4$ and $\Omega_\go$ respectively.
Repeating \eqref{eq:idea-one-coord} coordinatewise gives the identity
\begin{equation}
\label{eq:idea-basic-repeat}
	f(\bs z)=D^n\E\left[\left(\textstyle\prod_{j=1}^nR_j\right) f\big(W_1,\ldots, W_n\big)\right],
\end{equation}
which immediately implies a discretization inequality of the desired form, except with exponential dependence on $n$.
The idea is to notice that \eqref{eq:idea-basic-repeat} is an expectation over $n$-many independent pairs of variables $(R_j, W_j)$, while $f$ is of bounded total degree $d$ and thus is not very ``aware'' that $\big((R_1,W_1),\ldots,(R_n,W_n)\big)$ is a product distribution.

It turns out that by introducing certain correlations among the $W_j$'s, we can reduce the power on $D$ at the expense of picking up an error term:
\begin{equation}
\label{eq:idea-m=d}
	f(\bs z)=D^d\E\left[\left(\textstyle\prod_{j=1}^dS_j\right) f\big(\widetilde{W}_1,\ldots, \widetilde{W}_n\big)\right]\;+\;\mathrm{error}_{f,\bs z}\,.
\end{equation}
Here the $S_j$'s are i.i.d. over $\Omega_4$ and the $\widetilde{W}_j$'s are still over $\Omega_\go$, but now the joint distribution $(\widetilde{W}_1,\ldots, \widetilde{W}_n)$ has an intricate dependence structure.
If we only had the first term we would be done of course, and with the right $d$-dependence in the constant.
To remove the error term, we will take advantage of algebraic features of the error's relationship to the introduced correlations.
Specifically, the correlation construction actually defines a family of identities similar to \eqref{eq:idea-m=d} of the form
\[f(\bs z)=D^m\E\left[\left(\textstyle\prod_{j=1}^mS_j^{(m)}\right) f\big(\widetilde{W}_1^{(m)},\ldots, \widetilde{W}_n^{(m)}\big)\right]\;+\;\mathrm{error}_{f,\bs z}\left(\tfrac1m\right),\]
for any integer $m>1$, and where $\mathrm{error}_{f,\bs z}$ is a fixed polynomial in $1/m$ of degree at most $d-1$ and with no constant term.
These properties imply there is an affine combination of these identities for $m=d,d+1,\ldots, 2d-1$ that eliminates the error term:
\begin{equation}
\label{eq:idea-final}
	f(\bs z)\;=\;\sum_{m=d}^{2d-1}a_mf(\bs z)\;=\;\sum_{m=d}^{2d-1}a_mD^m\E\left[\left(\textstyle\prod_{j=1}^mS_j^{(m)}\right) f\big(\widetilde{W}^{(m)}_1,\ldots, \widetilde{W}_n^{(m)}\big)\right],
\end{equation}
and where the absolute sum of the $a_m$'s is suitably small.
This directly gives Theorem \ref{thm design}.

\subsection*{A new polynomial interpolation formula}
All the expectations in \eqref{eq:idea-final} are over finite probability spaces, so we actually have proved a new interpolation formula of the form
\begin{equation}
	\label{eq:interp}
f(\bs z)=\sum_{\bs{\xi}\in Y_n}c_{\bs{\xi}}^{(\bs z)}f(\bs{\xi})\,,
\end{equation}
where $\sum_{\bs{\xi}}|c_{\bs{\xi}}^{(\bs z)}|$ is bounded independent of dimension.
This is Theorem \ref{thm:interpolation}.

Comparing \eqref{eq:interp} to classical multivariate polynomial interpolation formulas, we obtain coefficients with dimension-free absolute sum at the expense of sampling more points than strictly necessary.
As a result the linear combination \eqref{eq:interp} is not unique, and it is interesting to understand whether this flexibility can lead to sharpenings of Theorem \ref{thm design}.

In the full proof, the identity \eqref{eq:interp} appears in detail as Equation \eqref{eq:f a_j}.
We hope this interpolation formula can have future applications and offer as a first example usage a short proof of a dimension-free discretization inequality for $L^p$ norms, $1\leq p <\infty$, as we describe next.

\subsection*{\boldmath Dimension-free discretization for \texorpdfstring{$L^p$}{Lp} norms}
Let $L^p(\T^n)$ and $L^p(\Om_\go^n)$ denote the $L^p$-space with respect to the uniform probability measures on $\T^n$ and $\Omega_\go^n$, respectively.
When $Y_n=\Om_\go^n$, one way to prove a dimension-free $L^p$ discretization inequality for $p<\infty$ would be to use hypercontractivity over $\T^n$ \cite{jansonhyper} and over $\Omega_K^n$ \cite{weissler,junge}.
Hypercontractivity is a workhorse of high dimensional analysis \cite{bakryHyperBook, HuHyperBook} and implies dimension-free $L^2$-$L^p$ comparisons for bounded-degree polynomials when $1\le p<\infty$ (see \cite[Chapter 9.5]{ODbook} and \cite[Chapter 8.4]{DFOCOS, Defant_García_Maestre_Sevilla-Peris_2019} for discussion).
For example, with $2\leq p <\infty$, and $f$ a degree-$d$ function on $\Omega_K^n$, the argument is
\[\|f\|_{L^p(\T^n)}\Lesssim{d,p}\|f\|_{L^2(\T^n)}=\|f\|_{L^2(\Omega_\go^n)}\leq \|f\|_{L^p(\Omega_\go^n)}\,,\]
where hypercontractivity on the polytorus is applied in the first inequality.
Note, however, that such a hypercontractivity argument does not work for $p=\infty$.

In Section \ref{sect:lp} we show a proof that avoids hypercontractivity altogether by making use of the interpolation formula \eqref{eq:interp} (or more concretely, Equation \eqref{eq:f a_j}). 
The main result of Section \ref{sect:lp} is the following.

\begin{theorem}\label{thm:p norm design}
Let $d,n\ge 1, \go\ge 2$. Let $1\leq p\leq \infty$. Then for each polynomial $f:\T^n\to \C$ of degree at most $d$ and individual degree at most $\go-1$, the following holds:
	\[\|f\|_{L^p(\T^n)}\le C(d,\go)\|f\|_{L^p(\Om_\go^n)}\]
	with $C(d,\go) \le d(C_1 \log(\go)+C_2)^{d}$ with universal $C_1, C_2>0$.
\end{theorem}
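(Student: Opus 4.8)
The plan is to prove Theorem~\ref{thm:p norm design} directly from the explicit interpolation identity behind Theorem~\ref{thm:interpolation}, i.e. from Equation~\eqref{eq:f a_j}, avoiding hypercontractivity entirely and treating all $1\le p\le\infty$ on the same footing. Concretely, that identity presents $f$ as
\[
	f(\z)=\sum_{m}a_m D^m\,\E\!\left[\Big(\textstyle\prod_{j=1}^{m}S^{(m)}_j\Big)\,f\big(\widetilde W^{(m)}\big)\right],\qquad \z\in\T^n,
\]
where each $S^{(m)}_j$ is unimodular, the random point $\widetilde W^{(m)}$ takes values in $\Om_\go^n$ with a law depending on $\z$, $D=D(\go)=\mc O(\log\go)$ is the one-dimensional constant, the index $m$ runs over a set of cardinality at most $d$, and $(a_m)$ is the affine combination chosen to annihilate the error polynomial in $1/m$. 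Equivalently this is the formula $f(\z)=\sum_{\bs\xi\in\Om_\go^n}c_{\bs\xi}^{(\z)}f(\bs\xi)$ with $\sum_{\bs\xi}\big|c_{\bs\xi}^{(\z)}\big|\le\sum_m|a_m|D^m$.

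First I would isolate the equivariance of this construction under the coordinatewise action of $\Om_\go^n$ on $\T^n$: replacing $\z$ by $\bs\omega\cdot\z$ with $\bs\omega\in\Om_\go^n$ replaces each $\widetilde W^{(m)}$ by $\bs\omega\cdot\widetilde W^{(m)}$ and leaves the $S^{(m)}_j$ and the $a_m$ unchanged. Writing a uniformly random $\z\in\T^n$ as $\bs\omega\cdot\z_0$ with $\bs\omega$ uniform on $\Om_\go^n$, independent of $\z_0$ uniform on a fundamental domain, and using that $\widetilde W^{(m)}$ for the point $\z_0$ is a measurable function of $\z_0$ and independent auxiliary randomness, one obtains that $\widetilde W^{(m)}=\bs\omega\cdot\widetilde W^{(m)}_0$ is uniformly distributed on $\Om_\go^n$. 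Hence $\E_{\z\sim\T^n}\,\E\,\big|f(\widetilde W^{(m)})\big|^p=\|f\|_{L^p(\Om_\go^n)}^p$ for every $m$ and every $p<\infty$, and the analogous statement with essential suprema for $p=\infty$.

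Next, for $1\le p<\infty$, I would take $L^p(\T^n)$-norms across the identity: using $|S^{(m)}_j|=1$ and the triangle inequality pointwise in $\z$, then Minkowski's inequality to bring the norm inside $\sum_m$, then Jensen's inequality $\E X\le(\E X^p)^{1/p}$ to bring it inside the probabilistic expectation, and finally the equivariance computation, one gets
\[
	\|f\|_{L^p(\T^n)}\;\le\;\sum_m |a_m|D^m\Big(\E_{\z}\,\E\,\big|f(\widetilde W^{(m)})\big|^p\Big)^{1/p}\;=\;\Big(\sum_m|a_m|D^m\Big)\|f\|_{L^p(\Om_\go^n)}.
\]
The case $p=\infty$ is the same inequality with Minkowski and Jensen replaced by the triangle inequality together with the uniform bound on $\sum_{\bs\xi}|c^{(\z)}_{\bs\xi}|$, so the argument is genuinely uniform in $p$. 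It remains to estimate $\sum_m|a_m|D^m$: the coefficients $a_m$ are, up to normalization, the Lagrange-extrapolation weights recovering the value at ``$1/m=0$'' of a polynomial of degree at most $d-1$ from its values at the $d$ nodes $\{1/m:m\in M\}$, so choosing $M$ to be the shortest admissible block of small integers keeps $D^m\le D^{\mc O(d)}$ while the extrapolation stays essentially interior and $\sum_{m\in M}|a_m|\le(\mc O(1))^d$; carrying out this computation yields $\sum_m|a_m|D^m\le d\,(C_1\log\go+C_2)^d$, the asserted constant. (Note the $\log\go$ persists for all $p$: it enters through the probabilistic multipliers $D^m$, not through an $L^p$-discretization step, so it cannot be traded for the $\go$-free Marcinkiewicz--Zygmund constant of~\eqref{ineq:mz classical}.)

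The two steps I expect to require real care are the equivariance and the coefficient estimate. For equivariance one must check that the construction underlying~\eqref{eq:f a_j} is assembled from $\Om_\go$-equivariant one-dimensional pieces and that the correlations introduced among the $\widetilde W^{(m)}_j$ respect this symmetry; this is exactly the mechanism that converts ``$\z$ uniform on $\T^n$'' into ``$\widetilde W^{(m)}$ uniform on $\Om_\go^n$'' with no dimensional loss, and without it the reduction to $\|f\|_{L^p(\Om_\go^n)}$ collapses. For the coefficient estimate the obstacle is the opposing monotonicities: $|a_m|$ grows as the nodes $1/m$ recede from $0$ (that is, as $m$ decreases), while $D^m$ grows with $m$, so one must pin down the block $M$ that balances the two and verify the Lagrange weights are only exponentially large in $d$, not worse.
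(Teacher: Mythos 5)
Your proposal follows the same route as the paper: express $f$ via the interpolation identity \eqref{eq:f a_j}, exploit the $\Om_\go$-equivariance of the construction, pass the $L^p(\T^n)$-norm through the probabilistic representation, and bound the coefficient sum $\sum_m|a_m|D^m$ with the nodes $m_j=d+j$. The use of Minkowski instead of the paper's Jensen with the $d^{p-1}$ factor is cosmetic and yields the same constant up to an absolute factor.

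The one place where you are too quick, and which is exactly the crux that the paper spends its effort on, is the equivariance step. You write that a uniform $\z\in\T^n$ factors as $\bs\omega\cdot\z_0$ and that consequently $\widetilde W^{(m)}=\bs\omega\cdot\widetilde W^{(m)}_0$ with $\widetilde W^{(m)}_0$ ``a measurable function of $\z_0$ and independent auxiliary randomness,'' hence independent of $\bs\omega$. That independence is \emph{not} automatic. In the paper's proof the identity $\omega^{(x_j\xi_j)}_{\y}(U)=\xi_j\,\omega^{(x_j)}_{\y_j}(U)$ holds only after replacing the reference tuple $\y$ by the $\xi_j$-dependent reordering $\y_j=(\overline{\xi_j},\overline{\xi_j}\omega,\dots,\overline{\xi_j}\omega^{\go-1})$, i.e.\ the ``residual'' $W^{(\x)}_{Y_n'}(U_P)$ is built from $\bs\xi$-dependent tuples $\y_j$ and is therefore \emph{not} a function of $\z_0$ and independent auxiliary randomness alone. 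Because of this, the statement ``$\widetilde W^{(m)}$ is uniformly distributed on $\Om_\go^n$'' does not follow directly from the decomposition $\z=\bs\omega\,\z_0$ as you wrote it; the paper instead carries the reordered tuples $\y_j$ through the whole argument and then invokes the group structure of $\Om_\go^n$ only at the level of the combined average $\E_{\bs\xi}\E_{\x}$. You should rework the equivariance paragraph to make this $\bs\omega$-dependence of the residual explicit and to justify the reduction to $\|f\|_{L^p(\Om_\go^n)}$ along the lines of the computation with $c_k(\y;x_j\xi_j)=c_k(\y_j;x_j)$ in Section~\ref{sect:lp}; as written, the claim of independence is the gap in the proposal.

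One minor further remark: when you estimate $\sum_m|a_m|D^m$, the correct upper bound in the paper comes from taking $m_j=d+j$ for $0\le j\le d-1$ and using \eqref{eq:inverse of vandermonde} for the Vandermonde inverse at the nodes $1/m_j$; the resulting $|a_j|\le C(4e)^d$ and hence $\sum_j|a_j|D^{m_j}\le Cd(4eD^2)^d$ is where the factor $d$ in the theorem statement comes from, so a bound of the form $\bigl(\mc O(1)\bigr)^d$ for $\sum_m|a_m|$ alone, without tracking the $D^{m}$ weights, is not quite what you want to cite.
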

We remark that the constant in the inequality of Theorem \ref{thm:p norm design} is independent from $p$ but dependent on $d$, so has a different character from Marcinkiewicz--Zygmund inequalities, where the constant depends on $p$ but is typically required to be independent from the total degree $d$ for $1<p<\infty$.

\medskip

\subsection*{Consequences}
It is worthwhile to mention that our results improve various known work. 

As outlined before \eqref{ineq:sketch-comparison}, we may combine Theorem \ref{thm design} with a standard Cauchy estimate to obtain the so-called \emph{Figiel's inequality} for spectral projections of polynomials on $\Om_\go^n$.

\begin{corollary}
    Let $f: \Om_\go^n \to \C$ be a degree-$d$ polynomial with individual degree at most $K-1$.
    For $0 \le \ell \le d$ let $f_\ell$ be the degree-$\ell$ homogeneous part of $f$.
    Then 
    \[
        \|f_\ell\|_{\Omega_\go^n} \le \big(\mc O(\log K)\big)^{2d}\|f\|_{\Omega_\go^n}\,.
    \]
\end{corollary}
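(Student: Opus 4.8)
The plan is to deduce this directly from Theorem \ref{thm design} together with an elementary Cauchy estimate, exactly as sketched before \eqref{ineq:sketch-comparison}. First I would recall that a degree-$d$ polynomial $f:\Om_\go^n\to\C$ of individual degree at most $\go-1$ is, by Fourier expansion, the restriction of a unique analytic polynomial of the form \eqref{eq:polynomial of degree d and local degree N-1}, and that its degree-$\ell$ homogeneous part $f_\ell$ is a sub-sum of the same monomials; hence $f_\ell$ has total degree $\ell\le d$ and still individual degree at most $\go-1$. The key point to keep straight is that Theorem \ref{thm design} will be applied to $f$ itself, while the passage from $f$ to $f_\ell$ is handled entirely by a classical estimate on $\T^n$.

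Second, I would prove the pointwise comparison $\|f_\ell\|_{\T^n}\le\|f\|_{\T^n}$. Fix $\bm z\in\T^n$ and set $g(t):=f(t\bm z)$, a univariate polynomial in $t$ of degree at most $d$. Expanding in the monomial basis, $g(t)=\sum_{k=0}^d t^k f_k(\bm z)$, so the coefficient of $t^\ell$ equals $f_\ell(\bm z)$. A Cauchy estimate on the unit circle then gives
\[
|f_\ell(\bm z)|=\Big|\frac{1}{2\pi\iu}\oint_{|t|=1}\frac{g(t)}{t^{\ell+1}}\,dt\Big|\le\max_{|t|=1}|f(t\bm z)|\le\|f\|_{\T^n},
\]
where the last inequality uses that $t\bm z\in\T^n$ whenever $|t|=1$ and $\bm z\in\T^n$. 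Taking the supremum over $\bm z\in\T^n$ yields the claim.

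Third, I would assemble the chain. Since $\Om_\go^n\subset\T^n$, trivially $\|f_\ell\|_{\Om_\go^n}\le\|f_\ell\|_{\T^n}$. Applying the step above and then Theorem \ref{thm design} in the special case $Z_j=\Om_\go$ for all $j$ (which furnishes the explicit constant $\big(\mc O(\log\go)\big)^{2d}$, using also $\|f\|_{\T^n}\le\|f\|_{\D^n}$) gives
\[
\|f_\ell\|_{\Om_\go^n}\le\|f_\ell\|_{\T^n}\le\|f\|_{\T^n}\le\big(\mc O(\log\go)\big)^{2d}\|f\|_{\Om_\go^n},
\]
which is the desired bound. I do not anticipate a genuine obstacle here: the corollary is a formal consequence of the main theorem, and the only matter requiring care is the bookkeeping noted above, namely that $f_\ell$ inherits the individual-degree bound and that the homogeneous projection is performed via the Cauchy integral on the torus, where it is classical and constant-free.
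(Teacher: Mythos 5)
Your proof is correct and follows essentially the same route the paper indicates (the paper only states the corollary and points back to its sketch before \eqref{ineq:sketch-comparison}): namely, obtain $\|f_\ell\|_{\Omega_K^n}\le\|f\|_{\T^n}$ by restricting $f$ to a complex line $t\mapsto f(t\bs z)$ and applying a Cauchy estimate for the $\ell$-th Taylor coefficient, then invoke Theorem~\ref{thm design} with $Y_n=\Omega_K^n$ to pass from $\|f\|_{\T^n}$ to $\|f\|_{\Omega_K^n}$. The only cosmetic difference is that the paper fixes $\bs z=\bs x^*\in\Omega_K^n$ maximizing $|f_\ell|$ and goes directly to $\|f_\ell\|_{\Omega_K^n}\le\|f\|_{\T^n}$, whereas you take the supremum over all $\bs z\in\T^n$ to get $\|f_\ell\|_{\T^n}\le\|f\|_{\T^n}$ first and then restrict — an equivalent and equally valid bookkeeping; also note your observation that $f_\ell$ inherits the individual-degree bound is correct but unused, since Theorem~\ref{thm design} is applied to $f$, not to $f_\ell$.
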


\medskip

The chain of inequalities \eqref{eq:BH-implication} shows that Theorem \ref{thm design} relates the best constants $\mathrm{BH}_{\Omega_\go}^{\leq d}$, $\mathrm{BH}_{\T}^{\leq d}$ in the BH inequality for cyclic groups and the polytorus by
\[\mathrm{BH}_{\Omega_\go}^{\leq d} \leq \big(\mathcal{O}(\log \go)\big)^{2d}\cdot \mathrm{BH}_{\T}^{\leq d}\,.\]
In \cite{bayart} the authors obtain the best-known bound for the BH inequality on the polytorus, showing that $\mathrm{BH}_{\T}^{\leq d} \le C^{\sqrt{d \log d}}$. As a consequence the following BH inequality holds for cyclic groups.  

\begin{corollary}
    \label{cor:BH}
    Let $f: \Om_\go^n \to \C$ be a degree-$d$ polynomial with individual degree at most $K-1$. Then
    \begin{equation}\label{ineq: bh cyclic}
		\left(\sum_{|\bm\alpha|\le d}\left|\widehat{f}(\bm\alpha)\right|^{\frac{2d}{d+1}}\right)^{\frac{d+1}{2d}}
		\le \big(\mc{O}(\log \go)\big)^{2d}\|f\|_{\Omega_\go^n},
    \end{equation}
    where $C$ is a universal constant.
\end{corollary}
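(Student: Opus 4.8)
The plan is to concatenate three facts already in hand, precisely along the chain \eqref{eq:BH-implication}: the classical Bohnenblust--Hille inequality on the polytorus, the norm comparison of Theorem \ref{thm design} specialized to $Y_n=\Om_\go^n$, and the quantitative polytorus bound $\mathrm{BH}_{\T}^{\le d}\le C^{\sqrt{d\log d}}$ of \cite{bayart}. No new ingredient is needed; the work is entirely in assembling these and bookkeeping the constants.

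First, a function $f:\Om_\go^n\to\C$ written as in \eqref{eq:polynomial of degree d and local degree N-1} is in particular an analytic polynomial of degree at most $d$ on the polydisc, so \eqref{eq:og-BH-sketch} applies and gives
\begin{equation*}
\left(\sum_{|\bm\alpha|\le d}\left|\widehat f(\bm\alpha)\right|^{\frac{2d}{d+1}}\right)^{\frac{d+1}{2d}}=\|\widehat f\|_{\frac{2d}{d+1}}\le \mathrm{BH}_{\T}^{\le d}\,\|f\|_{\T^n}.
\end{equation*}
Next I would apply Theorem \ref{thm design} with $Z_1=\dots=Z_n=\Om_\go$: here the minimal pairwise distance $\eta=2\sin(\pi/\go)$ is positive, and the theorem furnishes the explicit bound $C(d,\go)\le\big(\mc O(\log\go)\big)^{2d}$. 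Since an analytic polynomial attains its supremum modulus over the closed polydisc on the distinguished boundary $\T^n$, we have $\|f\|_{\T^n}=\|f\|_{\D^n}$, and hence
\begin{equation*}
\|f\|_{\T^n}\le\big(\mc O(\log\go)\big)^{2d}\,\|f\|_{\Om_\go^n}.
\end{equation*}
Combining the two displays yields $\|\widehat f\|_{\frac{2d}{d+1}}\le \mathrm{BH}_{\T}^{\le d}\cdot\big(\mc O(\log\go)\big)^{2d}\,\|f\|_{\Om_\go^n}$.

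It then remains to insert $\mathrm{BH}_{\T}^{\le d}\le C^{\sqrt{d\log d}}$ and to absorb this factor into $\big(\mc O(\log\go)\big)^{2d}$. This is legitimate because $\go\ge2$ forces $\log\go\ge\log2>0$, so the factor $\big(\mc O(\log\go)\big)^{2d}$ grows at least geometrically in $d$ with a rate we may make as large as we wish by enlarging the absolute constant hidden in $\mc O(\cdot)$, whereas $C^{\sqrt{d\log d}}=e^{\mc O(\sqrt{d\log d})}$ grows only subexponentially in $d$; hence the product $C^{\sqrt{d\log d}}\cdot\big(\mc O(\log\go)\big)^{2d}$ is again of the form $\big(\mc O(\log\go)\big)^{2d}$, which is \eqref{ineq: bh cyclic}. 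The only point deserving attention is exactly this last absorption step — one should check that the degree dependence $C^{\sqrt{d\log d}}$ contributed by \cite{bayart} is dominated by the degree dependence already present in $\big(\mc O(\log\go)\big)^{2d}$; since $\sqrt{d\log d}=o(d)$ and $\log\go$ is bounded away from $0$, this holds after possibly enlarging the universal constant, and there is otherwise no genuine obstacle.
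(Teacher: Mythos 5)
Your proof is correct and follows precisely the chain \eqref{eq:BH-implication} that the paper itself lays out just before the corollary: polytorus BH, then Theorem \ref{thm design} with $Y_n=\Om_\go^n$, then the bound $\mathrm{BH}_{\T}^{\le d}\le C^{\sqrt{d\log d}}$ from \cite{bayart} absorbed into $\big(\mc O(\log\go)\big)^{2d}$. This is the same route the paper takes, and your care with the final absorption step (noting $\sqrt{d\log d}=o(d)$ and $\log\go\ge\log 2>0$) is exactly the right justification.
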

\noindent This improves upon the constant $C(K)^{d^2}$ previously obtained in \cite{SVZbh} and, as an application, improves the sample complexity of various classical and quantum learning tasks.

Note that the best constant $\mathrm{BH}_{\{\pm 1\}}^{\leq d}$ in the hypercube BH is also known to be at most $C^{\sqrt{d\log d}}$
\cite{DMP}.
This raises a natural question about the best constant for intermediate $K$, $3\leq K<\infty$.

\begin{question}
    What is the best constant for the cyclic-group Bohnenblust--Hille, $\mathrm{BH}_{\Omega_K}^{\leq d}$?
    As a starting point, is $\mathrm{BH}_{\Omega_K}^{\leq d}$ subexponential in $d$?
\end{question}

\medskip

When specialized to real polynomials, our inequality is related to the discrete variants of Remez inequalities of \cite{Y, BY}. 
Theorem \ref{thm design} shows that for certain grids, the constant can be dimension-free (\emph{c.f.,} \cite[Equation (2.5)]{Y}).
We formulate here a real variable version for convenience, in the case of the regular grid. 

\begin{corollary}
	\label{thm: remezR}
	Fix $d,n\ge 1$. Let $f:[-1,1]^n\to \R$ be a real polynomial of degree at most $d$ and individual degree at most $K-1$,
	that is,
	\begin{equation*}
		f(\x)=\sum_{\bm\alpha\in \{0,1,\ldots, K-1\}^n :\,|\bm\alpha|\le d} a_{\bm\alpha} \x^{\bm\alpha},\qquad a_{\bm\alpha}\in \R.
	\end{equation*}
	Then we have 
	\begin{equation}
		\|f\|_{[-1,1]^n}\le C(d,K)\|f\|_{G_K^n},
	\end{equation}
	where  $G_K$ is the grid of $K$ points equi-distributed on $[-1,1]$.
	Here $C(d,K)>0$ is a constant depending on $d$ and $K$ only.
\end{corollary}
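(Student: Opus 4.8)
The plan is to derive Corollary \ref{thm: remezR} as a direct specialization of Theorem \ref{thm design} (equivalently, of the interpolation formula in Theorem \ref{thm:interpolation}) to the real category. The first step is to pick the sampling set: let $G_K = \{t_1, \dots, t_K\} \subset [-1,1]$ be the $K$ equi-distributed points, say $t_j = -1 + \tfrac{2(j-1)}{K-1}$ for $1 \le j \le K$, and take $Z_j = G_K$ for every coordinate $j$, so $Y_n = G_K^n \subset [-1,1]^n \subset \D^n$. Since the $K$ points are equally spaced on an interval of length $2$, the minimal pairwise distance is $\eta = \tfrac{2}{K-1}$, which is strictly positive and depends only on $K$; in particular the uniform separation hypothesis \eqref{uniform sparsity} of Theorem \ref{thm design} is satisfied with a value of $\eta$ that is a function of $K$ alone, not of $n$.

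The second step is to observe that a real polynomial $f:[-1,1]^n\to\R$ of total degree at most $d$ and individual degree at most $K-1$, written as $f(\x)=\sum_{\bm\alpha\in\{0,\dots,K-1\}^n,\,|\bm\alpha|\le d} a_{\bm\alpha}\x^{\bm\alpha}$ with $a_{\bm\alpha}\in\R$, is in particular an analytic polynomial $\D^n\to\C$ of the form \eqref{eq:polynomial of degree d and local degree N-1} (with real Fourier coefficients $\widehat{f}(\bm\alpha)=a_{\bm\alpha}$). Hence Theorem \ref{thm design} applies verbatim and gives
\[
\|f\|_{\D^n}\le C(d,K,\eta)\,\|f\|_{Y_n}=C(d,K,\tfrac{2}{K-1})\,\|f\|_{G_K^n}.
\]
Now $[-1,1]^n\subset\D^n$, so $\|f\|_{[-1,1]^n}\le\|f\|_{\D^n}$, which yields exactly
\[
\|f\|_{[-1,1]^n}\le C(d,K)\,\|f\|_{G_K^n}
\]
with $C(d,K):=C(d,K,\tfrac{2}{K-1})=C(K,\tfrac{2}{K-1})^d$, a constant depending only on $d$ and $K$ (through $\eta=\tfrac{2}{K-1}$), as claimed.

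There is essentially no obstacle here — the content is entirely in Theorem \ref{thm design}, and the only things to check are bookkeeping: that $G_K^n$ is a legitimate product set of the required type, that $\eta>0$ and depends on $K$ only, and that restricting from $\D^n$ to $[-1,1]^n$ only shrinks the sup-norm. The one point worth a sentence of care is the reality of the conclusion: $\|f\|_{[-1,1]^n}$ and $\|f\|_{G_K^n}$ coincide whether one takes the supremum of $|f|$ over real points or views $f$ as complex-valued on those same real points, so the estimate obtained from the complex-category Theorem \ref{thm design} is literally the real-category statement of the corollary; this is precisely the content of the remark following Theorem \ref{thm design}. (If one prefers to track the $K$-dependence of $C(d,K)$ more explicitly, it can be read off from the proof of Theorem \ref{thm design} with $\eta=\tfrac{2}{K-1}$, but the corollary as stated only asserts dependence on $d$ and $K$, so this is optional.)
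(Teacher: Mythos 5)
Your proposal is correct and follows exactly the same route as the paper: specialize Theorem \ref{thm design} to the product sampling set $G_K^n \subset [-1,1]^n \subset \D^n$ with $\eta = \tfrac{2}{K-1}$, and use $\|f\|_{[-1,1]^n} \le \|f\|_{\D^n}$. The paper's proof is a one-liner; your version simply spells out the bookkeeping the paper leaves implicit.
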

\begin{proof}Let us use Theorem \ref{thm design} with $f$ a real polynomial of degree at most $d$ and individual degree at most $K-1$, and sampling set $G_K^n\subset [-1,1]^n$.
\end{proof}

\bigskip

\subsection*{Acknowledgments}  The fourth author is grateful to Alexander Borichev who helped to get rid of some non-essential assumptions.
The authors are very grateful to the referees for useful historical references and valuable comments that greatly improved the presentation of the paper.

\section{Discretizations from product sets}
\label{sect:designs}

Here we prove Theorem \ref{thm:interpolation} and obtain Theorem \ref{thm design} as an immediate consequence.

\subsection{Some preparations}
\label{subsect:preparations}
We start with a lemma that records a standard estimate for inverses of Vandermonde matrices. 

\begin{lemma}\label{lem:vandermonde}
Suppose that $d\ge 1$ and $(x_0,\dots, x_{d-1})\in \C^{d}$ is a vector such that 
\begin{equation}\label{ineq:bdd assumption of vandermonde lem}
\max_{0\le j\le d-1}|x_j|\le M,\qquad \min_{0\le j<k\le d-1}|x_j-x_k|\ge \eta
\end{equation}
with $0<\eta, M<\infty$.
Let $V[x_0,\dots, x_{d-1}]=[a_{jk}]_{j,k=0}^{d-1}$ be the $d$-by-$d$ Vandermonde matrix associated to $(x_0,\dots, x_{d-1})$ with $a_{jk}=x_k^j$. Then its inverse, $V[x_0,\dots, x_{d-1}]^{-1}=[b_{jk}]_{j,k=0}^{d-1}$ satisfies
\begin{equation}
|b_{jk}|\le \frac{M^{d-1-k}\binom{d-1}{k}}{\eta^{d-1}},\qquad 0\le j,k\le d-1.
\end{equation}
\end{lemma}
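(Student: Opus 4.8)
The plan is to recognize the rows of $V[x_0,\dots,x_{d-1}]^{-1}$ as the coefficient vectors of the Lagrange interpolation basis for the nodes $x_0,\dots,x_{d-1}$, and then to estimate those coefficients by elementary means. For $0\le j\le d-1$ let
\[
L_j(x)=\prod_{\substack{0\le m\le d-1\\ m\neq j}}\frac{x-x_m}{x_j-x_m}
\]
be the $j$-th Lagrange basis polynomial, which is well defined since the separation hypothesis \eqref{ineq:bdd assumption of vandermonde lem} forces the $x_j$ to be distinct, and write $L_j(x)=\sum_{k=0}^{d-1}b_{jk}x^k$. The first step is to check that $[b_{jk}]_{j,k=0}^{d-1}$ is indeed $V[x_0,\dots,x_{d-1}]^{-1}$: since $a_{k\ell}=x_\ell^k$, one has
\[
\sum_{k=0}^{d-1}b_{jk}a_{k\ell}=\sum_{k=0}^{d-1}b_{jk}x_\ell^k=L_j(x_\ell)=\delta_{j\ell},
\]
so $[b_{jk}]$ is a left inverse of the square matrix $V[x_0,\dots,x_{d-1}]$ and hence equals $V[x_0,\dots,x_{d-1}]^{-1}$.

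It then remains to bound $|b_{jk}|$. Write $L_j(x)=N_j(x)/D_j$ with $N_j(x)=\prod_{m\neq j}(x-x_m)$ and $D_j=\prod_{m\neq j}(x_j-x_m)$. The denominator is a product of $d-1$ factors each of modulus at least $\eta$, so $|D_j|\ge\eta^{d-1}$. Expanding the numerator, the coefficient of $x^k$ in $N_j(x)$ equals $(-1)^{d-1-k}$ times the degree-$(d-1-k)$ elementary symmetric polynomial in the $d-1$ numbers $(x_m)_{m\neq j}$; since each satisfies $|x_m|\le M$, this coefficient has modulus at most $\binom{d-1}{d-1-k}M^{d-1-k}=\binom{d-1}{k}M^{d-1-k}$. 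Dividing, we obtain $|b_{jk}|\le\binom{d-1}{k}M^{d-1-k}/\eta^{d-1}$, as claimed.

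The argument is essentially bookkeeping, and I do not expect a genuine obstacle; the only points needing a little care are keeping the transpose/indexing convention straight — the choice $a_{jk}=x_k^j$ puts the nodes in the columns, so it is the \emph{rows} of $V^{-1}$ that carry the Lagrange coefficients — and the elementary symmetric polynomial estimate for the coefficients of $N_j$.
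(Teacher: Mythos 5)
Your proof is correct and takes essentially the same approach as the paper: the paper simply quotes the standard Lagrange-interpolation formula for $b_{jk}$ in terms of elementary symmetric polynomials divided by $\prod_{m\neq j}(x_j-x_m)$ and then reads off the bound, whereas you derive that formula explicitly before applying the same estimates.
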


\begin{proof}
Recall that
\begin{equation}\label{eq:inverse of vandermonde}
b_{jk}=\frac{(-1)^{d-1-k}e_{d-1-k}(\{x_0,\dots, x_{d-1}\}\setminus \{x_j\})}{\prod_{0\le m\le d-1:m\neq j}(x_j-x_m)},
\end{equation}
where $e_k(\{y_1,\dots, y_{d-1}\}):=\sum_{1\le i_1< \cdots< i_k\le d-1}y_{i_1}\cdots y_{i_k}$. Then the desired estimate follows immediately from the assumption \eqref{ineq:bdd assumption of vandermonde lem}. 
\end{proof}

Fix a family of distinct points $\{y_j\}_{j=0,\dots, \go-1}\subset \D$ such that 
	\[\min_{j\neq k}|y_{j}-y_{k}|\ge \eta \,.
	\]
	We denote by $V_{\y}$ the $\go$-by-$\go$ Vandermonde matrix associated to $\y:=(y_0,\dots, y_{\go-1})$ with $V_{\y}=[y^j_k]_{0\le j,k\le \go-1}$. Then $V_{\y}$ is invertible with $V^{-1}_{\y}=[b_{jk}]_{0\le j,k\le \go-1}$ satisfying
	\begin{equation}\label{ineq:vandermonde estimates}
		|b_{jk}|\le \eta^{1-\go}\binom{\go-1}{k},\qquad 0\le j,k\le \go-1
	\end{equation}
	according to Lemma \ref{lem:vandermonde}.
	So for any $ x\in \D$, the system (putting $0^0=1$)
	\begin{equation}
	\label{c-s1} 
	\sum _{k=0}^{\go-1} c_{k}(\y;x) y_k^j=\sum _{k=0}^{\go-1} c_{k}(x) y_k^j = x^j,\qquad 0\le j\le \go-1
	\end{equation}
has a unique solution 
\[(c_0(x),c_1(x),\dots, c_{\go-1}(x))^T=V_{\y}^{-1}(1,x,\dots, x^{\go-1})^T\in \C^\go\]
such that by \eqref{ineq:vandermonde estimates}
\begin{equation}\label{ineq:bound of sum of cj}
	\sum_{j=0}^{\go-1}|c_j(x)|\le \sum_{j=0}^{\go-1}\sum_{k=0}^{\go-1}|b_{jk}| |x|^k\le \frac{\go}{\eta^{\go-1}}\sum_{k=0}^{\go-1}\binom{\go-1}{k}=\go \left(\frac{2}{\eta}\right)^{\go-1}
\end{equation}
uniformly in $x\in \D$.
We remark that $c_{k}(x)=c_{k}(\y;x)$ depends on $\y$, while most of the time we only need the estimate \eqref{ineq:bound of sum of cj} that is uniformly bounded for $\y \in \D^n$. So in the sequel, we omit the dependence on $\y$ whenever no confusion can occur. 

\medskip

We write each complex number $c_k(x)$ in the following form
\begin{equation*}
	c_k(x)=c_k^{(1)}(x)-c_k^{(-1)}(x)+\iu c_k^{(\iu)}(x)-\iu c_k^{(-\iu)}(x),
\end{equation*}
where $c_k^{(s)}(x)=c_k^{(s)}(\y;x)$ are given by
\[
c_k^{(1)}(x)=(\Re c_k(x))_+,\qquad c_k^{(-1)}(x)=(\Re c_k(x))_-, 
\]
and
\[
c_k^{(\iu)}(x)=(\Im c_k(x))_+,\qquad c_k^{(-\iu)}(x)=(\Im c_k(x))_-.
\]

Note that \eqref{c-s1} with $j=0$ becomes $\sum _{k=0}^{\go-1} c_{k}(x)=1$, so that 
\begin{equation}
	\label{Re-c}
	\sum_{k=0}^{\go-1}c_k^{(1)}(x)=	\sum_{k=0}^{\go-1}c_k^{(-1)}(x)+1=:c^{(\Re)}(x)=c^{(\Re)}(\y; x),
	\end{equation}
	and
	\begin{equation}
	\label{Im-c}
		\sum_{k=0}^{\go-1}c_k^{(\iu)}(x)=	\sum_{k=0}^{\go-1}c_k^{(-\iu)}(x)=:c^{(\Im)}(x)=c^{(\Im)}(\y; x).
	\end{equation}
		
Put
	\[
	L= \max_{x\in \D,\y\in \D^n} \left\{c^{(\Re)}(\y;x), c^{(\Im)}(\y;x)\right\}\ge 0.
	\]
	Notice that  by definition of $L$ and \eqref{ineq:bound of sum of cj}
	\begin{equation}
	\label{N-N}
	L\le \max_{x\in \D,\y\in \D^n}\sum_{k=0}^{\go-1}|c_k(\y; x)|\le \go\left(\frac{2}{\eta}\right)^{\go-1}.
	\end{equation}
	So $L$ is a constant depending only on $\go$ and $\eta$.
	
		By definition of $L$, we can choose non-negative $t^{(s)}(x)=t^{(s)}(\y;x), s\in \{\pm 1, \pm \iu\}$ such that
	\begin{equation}	\label{Ct-i0}
		\sum_{k=0}^{\go-1}c_k^{(1)}(x) + t^{(1)}(x)  =L+1,  
	\end{equation}
\begin{equation}	\label{Ct-i}
	\sum_{k=0}^{\go-1}c_k^{(s)}(x) + t^{(s)}(x) =L, \quad s=-1, \iu, -\iu.
\end{equation}
	It follows immediately from \eqref{Re-c}, \eqref{Im-c}, \eqref{Ct-i0}, and \eqref{Ct-i} that
	\begin{equation}
	\label{t-i}
	 t^{(1)}(x)  = t^{(-1)}(x) \qquad  \textnormal{and}\qquad t^{(\iu)}(x)= t^{(-\iu)}(x).
	\end{equation}

\medskip
	
Put $D:=4L+1$. We need certain functions on $[0, D]$. The first one is $r=r(t)$ that depends only on $D$. The second one is $\W^{(x)}(t)=\W_{\y}^{(x)}(t)$ that depends on $x\in \D$ and $\y\in \D^n$. As before, we omit the $\y$-dependence in the notation whenever no confusion can occur.

We first divide the interval $[0,D]=[0,4L+1]$ into the disjoint union of intervals
	\begin{equation*}
		[0,D]=I^{(1)}\cup I^{(-1)}\cup I^{(\iu)}\cup I^{(-\iu)},
	\end{equation*}
with lengths $|I^{(1)}|=L+1$ and $|I^{(-1)}|=|I^{(\iu)}|=|I^{(-\iu)}|=L$. Then the function $r:[0,D]\to \{\pm 1,\pm \iu \}$ is defined as follows:
\begin{equation*}
	r(t)=s,\qquad t\in I^{(s)}.
\end{equation*}
To define $\W^{(x)}$ for any $x\in \D$, recall that \eqref{Ct-i0} and \eqref{Ct-i} allow us to further decompose each $I^{(s)}$ into disjoint unions:
\begin{equation*}
	I^{(s)}=\bigcup_{k=0}^{\go-1}\left(I^{(s)}_{k}(x)\cup J^{(s)}_{k}(x)\right),\qquad s=\pm 1, \pm \iu
\end{equation*}
with (recall again the omitted $\y$-dependence so actually $I^{(s)}_{k}(x)=I^{(s)}_{k}(\y;x)$ and $J^{(s)}_{k}(x)=J^{(s)}_{k}(\y;x)$)
\begin{equation*}
	|I_k^{(s)}(x)|=c_k^{(s)}(x)\qquad\textnormal{and}\qquad	|J_k^{(s)}(x)|=\frac{t^{(s)}(x)}{\go}
\end{equation*}
for all $0\le k\le \go-1$ and $s\in \{\pm 1, \pm \iu\}$.
Then the function $\W^{(x)}=\W^{(x)}_{\y}:[0,D]\to \{y_0,\dots, y_{\go-1}\}$ is defined as
\begin{equation*}
	\W^{(x)}(t)=y_k,\qquad t\in I^{(s)}_{k}(x)\cup J^{(s)}_{k}(x)
\end{equation*}
for all $0\le k\le \go-1$ and $s\in \{\pm 1, \pm \iu\}$.

Now assume that $U$ is a random variable uniformly distributed on $[0,D]$. Then by definition
	\begin{equation}
	\label{eq:expectation of r}
	\bE[ r(U)] = \frac1{D}.
\end{equation}
When $U$ takes values in $I^{(s)}_{k}(x)\cup J^{(s)}_{k}(x)$, we have 
\begin{equation*}
	r(U)=s,\qquad \W^{(x)}(U)=y_k.
\end{equation*}
This, together with definitions of $c_k^{(s)}(x), I^{(s)}_{k}(x), J^{(s)}_{k}(x)$, and equations \eqref{c-s1} and \eqref{t-i}, implies 
\begin{align*}
	\bE\left[r(U)\W^{(x)}(U)^{\alpha}\right]
	&=\frac{1}{D}\sum_{s=\pm 1,\pm \iu}\sum_{k=0}^{\go-1}sy_k^{\alpha}\left(c^{(s)}_k(x)+\frac{t^{(s)}(x)}{\go}\right)\\
	&=\frac{1}{D}\sum_{k=0}^{\go-1}y_k^{\alpha}\sum_{s=\pm 1,\pm \iu}
	s c^{(s)}_k(x)+\frac{1}{D}\sum_{s=\pm 1,\pm \iu}s t^{(s)}(x)\\
	&=\frac{1}{D}\sum_{k=0}^{\go-1}c_k(x)y_k^{\alpha}+0\\
	&=\frac{1}{D}x^{\alpha},
\end{align*}
whenever $0\le \alpha\le \go-1.$ We record this fact for later use: For $U$ uniformly distributed on $[0,D]$, we just proved that
\begin{equation}\label{eq:expectation of product r and w}
	\bE\left[r(U)\W^{(x)}(U)^{\alpha}\right]=\frac{1}{D}x^{\alpha},\qquad x\in \D, \quad0\le \alpha\le \go-1.
\end{equation}

\subsection{The proof idea and difficulty} We fix $\y=(y_0,\dots, y_{\go-1})\in \D^\go, \x=(x_1,\dots, x_n)\in \D^n$ and follow the notations in the previous subsection. We are going to use the functions $r:[0,D]\to \{\pm 1,\pm \iu\}$ and $\W^{(x_j)}=\W^{(x_j)}_{\y}:[0,D]\to \{y_0,\dots, y_{\go-1}\}$ defined above to prove that
\begin{equation*}
	|f(\x)|\le C\max_{\z\in \{y_0,\dots, y_{\go-1}\}^n}|f(\z)|
\end{equation*}
with a dimension-free constant $C$ for certain polynomials $f$. The idea is based on \eqref{eq:expectation of r} and \eqref{eq:expectation of product r and w}. Let us fix $d\ge 1$ and start with a monomial
 \[M(\z)=z_{i_1}^{\al_{i_1}}\dots z_{i_k}^{\al_{i_k}},\qquad \z=(z_1,\dots, z_n)
\]
where  $i_1<\cdots<i_k$ and $1\le \alpha_{i_j}\le \go-1$.
Suppose that $U_1,\dots, U_n$ are i.i.d. random variables uniformly distributed on $[0,D]$. 
Then for any $\supp(\bm\alpha)\subset J\subset [n]:=\{1,\dots, n\}$, we have by \eqref{eq:expectation of r}, \eqref{eq:expectation of product r and w} and the independence that 
\begin{align*}
		&\phantom{{}={}}\bE\left[\prod_{j\in J} r(U_{j})\cdot M\left(\W^{(x_{1})}(U_1),\dots, \W^{(x_{n})}(U_n)\right)\right]\\
		&=\prod_{j=1}^{k}	\bE\left[r(U_{i_j})\W^{(x_{i_j})}(U_{i_j})^{\alpha_j}\right]\prod_{j\in J\setminus \supp(\bm\alpha)}\bE\left[r(U_j)\right]\\
		&=\frac{1}{D^{|J|}}x_{i_1}^{\alpha_1}\cdots x_{i_k}^{\alpha_k}
		=\frac{1}{D^{|J|}}M(\x).
\end{align*}
Recall that $\W^{(x)}$ takes values in $\{y_0,\dots, y_{\go-1}\}$ and $|r(t)|\equiv 1$, and we may then deduce 
\begin{equation*}
	|M(\x)|\le D^{|J|} \max_{\z \in \{y_0, \dots, y_{\go-1}\}^n}|M(\z)|,
\end{equation*}
 as desired. Note that for a single monomial of degree at most $d$, we may choose $J=\supp(\bm\alpha)$ and thus $|J|\le d$. If we consider linear combinations of monomials whose supports are contained in some bounded interval $J$, then the above argument gives the desired estimate with constant $D^{|J|}$ by linearity in $M$ for fixed $J$. However, this is no longer the case if the size of $J$ is large enough; that is, when $|J|$ depends on $n$.
Consider polynomials of degree at most $d=2$ for example. The above argument works for $z_1 z_2+z_2 z_3$ with a choice of $J=\{1,2,3\}$ giving a constant of $D^3$, but for $z_1 z_2+z_2 z_3+\cdots +z_{n-1}z_n$ we need to choose $J=[n]$ which results in a dimension-dependent constant $D^n$.

\subsection{Proof of partial Theorems \ref{thm design} and \ref{thm:interpolation}: product of many copies of a single subset} 
\label{subsect:partial}
 As before, we fix $\y=(y_0,\dots, y_{\go-1})\in \D^\go, \x=(x_1,\dots, x_n)\in \D^n$. We want to show that for each polynomial $f:\D^n\to \C$ of degree at most $d$ and individual degree at most $\go-1$, it holds that
\begin{align*}
|f(\x)|\le C(d,\go)\max_{\z\in \{y_0,\dots, y_{\go-1}\}^n} |f(\z)|.
\end{align*}
As explained in the last subsection, the proof relies on the functions $r,\W^{(x)}$ and we shall overcome the difficulty of unbounded $|\cup_{\bm\alpha}\supp(\bm\alpha)|$  mentioned there as follows. For a monomial $\z^{\bm\alpha}$ of degree at most $d\ge 1$,
 the length of its support $|\supp(\bm\alpha)|$ is at most $d$ as well. Fix $m\ge d$ (one may take $m=d$ for the moment). Consider some map 
 \[
 P: [n] \to [m].
 \]
 Instead of working with polynomials in $\W^{(x_{1})}(U_{1}),\dots, \W^{(x_{n})}(U_{n})$, we shall consider
 \[\W^{(x_{1})}(U_{P(1)}),\dots, \W^{(x_{n})}(U_{P(n)}),\] which may potentially resolve the problems of constants being not dimension-free. However, it may break the independence and we need to estimate the error terms. For this, we make $P$ random as well.
 
Recall that $\y=(y_0,\dots, y_{\go-1})\in \D^\go$ is fixed, with respect to which one defines $\W^{(x)}=\W^{(x)}_{\y}$ function as above for any $x\in \D$. As before, $U_1,\dots,U_n$ are i.i.d. random variables taking values in $[0,D]$ uniformly.

	\begin{prop}
	\label{part1}
	Fix $k\ge 1$. Consider
	\[M(\z)=\z^{\bm\alpha}=z_{i_1}^{\al_{i_1}}\dots z_{i_k}^{\al_{i_k}}\]
	a monomial on $\D^n$ with $1\le \alpha_{i_j}\le \go-1$ and 
	\[\supp(\bm\alpha)=\{1\le i_1<\cdots <i_k\le n\}.\]
	Then for any $\x=(x_1,\dots, x_n)\in \D^n$ there exists a polynomial $p_{\bm\alpha,\bs x, \bs y}$ of degree at most $k-1$ such that $p_{\bm\alpha,\bs x, \bs y}(0)=0$ and the following holds.
	For any $m\ge k$, let $P:[n]\to [m]$ be constructed by choosing for each $j \in [n]$ uniformly at random $P(j) \in [m]$, then we have	
	\begin{equation}\label{eq:key identity monomial}
		M(\x)=D^m\bE_{U, P}\left[\prod_{\ell=1}^{m}r(U_\ell)\cdot \prod_{j=1}^{n}\W^{(x_j)}(U_{P(j)})^{\alpha_{j}}\right]+p_{\bm\alpha,\bs x, \bs y}\left(\frac{1}{m}\right).
	\end{equation}
\end{prop}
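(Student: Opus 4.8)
The plan is to evaluate the expectation on the right of \eqref{eq:key identity monomial} by conditioning on $U$, using that the values $P(1),\dots,P(n)$ are i.i.d.\ uniform on $[m]$ and independent of $U$, and then reorganizing according to the partition of $S:=\supp(\bm\alpha)$ that $P$ induces. Write $k:=|S|$. For a subset $B\subseteq S$ set
\[
g_B:=\bE_U\Big[r(U)\prod_{j\in B}\W^{(x_j)}(U)^{\alpha_j}\Big],\qquad U\sim\mathrm{Unif}[0,D].
\]
Three features will be used: $g_B$ depends on $\bm\alpha,\x,\y$ but \emph{not} on $m$; by \eqref{eq:expectation of r}, $D\,g_\emptyset=1$; and by \eqref{eq:expectation of product r and w}, $D\,g_{\{j\}}=x_j^{\alpha_j}$ for each $j\in S$. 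The quantities $g_B$ with $|B|\ge 2$ are precisely the ``collision'' terms that fail to factor, and they are what will make up $p_{\bm\alpha,\x,\y}$.

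Since $\W^{(x_j)}(\,\cdot\,)^{\alpha_j}\equiv 1$ for $j\notin S$, conditioning on $U$ and using independence of the $P(j)$'s gives
\[
\bE_P\Big[\prod_{j=1}^n\W^{(x_j)}(U_{P(j)})^{\alpha_j}\Big]=\prod_{j\in S}\frac1m\sum_{\ell=1}^m\W^{(x_j)}(U_\ell)^{\alpha_j}=\frac1{m^k}\sum_{\phi\colon S\to[m]}\prod_{j\in S}\W^{(x_j)}(U_{\phi(j)})^{\alpha_j}.
\]
Now multiply by $\prod_{\ell=1}^m r(U_\ell)$ and take $\bE_U$: for each fixed $\phi$ the integrand regroups as a product over $\ell\in[m]$ of functions of the single variable $U_\ell$, so by independence of $U_1,\dots,U_m$ the expectation factors, each $\ell\notin\phi(S)$ giving $\bE[r(U_\ell)]=1/D$ and each $\ell\in\phi(S)$ giving $g_{\phi^{-1}(\ell)}$. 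After multiplying by $D^m$ this reads
\[
D^m\,\bE_{U,P}\Big[\prod_{\ell=1}^m r(U_\ell)\prod_{j=1}^n\W^{(x_j)}(U_{P(j)})^{\alpha_j}\Big]=\frac1{m^k}\sum_{\phi\colon S\to[m]}\prod_{\ell\in\phi(S)}\big(D\,g_{\phi^{-1}(\ell)}\big).
\]
Grouping the maps $\phi$ by their kernel partition $\pi=\ker\phi$ of $S$, and using that there are exactly $(m)_{|\pi|}:=m(m-1)\cdots(m-|\pi|+1)$ maps with a prescribed kernel $\pi$, each contributing $\prod_{B\in\pi}(D\,g_B)$, we reach
\[
D^m\,\bE_{U,P}[\cdots]=\sum_{\pi\vdash S}\frac{(m)_{|\pi|}}{m^{k}}\prod_{B\in\pi}\big(D\,g_B\big).
\]

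The only partition of $S$ with $|\pi|=k$ is the partition into singletons; by $D\,g_{\{j\}}=x_j^{\alpha_j}$ its term equals $\tfrac{(m)_k}{m^k}M(\x)$, while every other $\pi$ has $r:=|\pi|\le k-1$. For any $1\le r\le k$, writing $t=1/m$,
\[
\frac{(m)_r}{m^{k}}=t^{\,k-r}\prod_{i=1}^{r-1}(1-i\,t)=:q_r(t)
\]
is a polynomial in $t$ of degree $k-1$, with $q_k(0)=1$ and with $q_r$ divisible by $t^{\,k-r}$ (hence $q_r(0)=0$) when $r\le k-1$. Therefore, setting the $m$-independent polynomial
\[
p_{\bm\alpha,\x,\y}(t):=M(\x)\big(1-q_k(t)\big)-\sum_{\pi\vdash S,\,|\pi|\le k-1}q_{|\pi|}(t)\prod_{B\in\pi}\big(D\,g_B\big),
\]
we get $\deg p_{\bm\alpha,\x,\y}\le k-1$ and $p_{\bm\alpha,\x,\y}(0)=0$; rearranging the previous identity and evaluating at $t=1/m$ gives exactly \eqref{eq:key identity monomial}, valid for every $m\ge k$.

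The one step requiring genuine care is the combinatorial reorganization in the second paragraph: recognizing that averaging over $P$ produces a weighted sum over partitions of $S$ with weights $(m)_{|\pi|}/m^k$, and checking that these weights are polynomials in $1/m$ that, as $m\to\infty$, equal $1$ on the singleton partition and $0$ on every coarser one. Once this is in place, the stated properties of $p_{\bm\alpha,\x,\y}$ are formal; the genuinely delicate part of the overall argument — taking an affine combination of \eqref{eq:key identity monomial} over several values of $m$ so that the $p_{\bm\alpha,\x,\y}(1/m)$ terms cancel while keeping the $\ell^1$-norm of the resulting coefficients under control — lies downstream and is not needed for this proposition.
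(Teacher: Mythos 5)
Your proof is correct and follows essentially the same approach as the paper's: you condition on the (random) partition of $\supp(\bm\alpha)$ induced by $P$, identify the singleton partition as the main term via \eqref{eq:expectation of product r and w}, and observe that the collision terms contribute polynomials in $1/m$ that vanish at $0$. The only stylistic difference is that you condition on $U$ first and sum over maps $\phi\colon S\to[m]$ grouped by kernel, whereas the paper conditions directly on the induced partition and writes $\Pr[P\text{ induces }\mathcal{S}]=(m)_{|\mathcal{S}|}/m^{k}$; you are also a bit more explicit in writing out the weight polynomials $q_r(t)=t^{k-r}\prod_{i=1}^{r-1}(1-it)$ and assembling $p_{\bm\alpha,\x,\y}$ from them, but the underlying argument is identical.
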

	
Before proceeding with the proof, note that by the independence of $U_\ell$'s, for any fixed $P:[n]\to [m]$ we have
\begin{align*}
	\bE_{U}\left[\prod_{\ell=1}^{m}r(U_\ell)\cdot \prod_{j=1}^{n}\W^{(x_j)}(U_{P(j)})^{\alpha_{j}}\right]
	&= \bE_{U}\left[\prod_{\ell=1}^{m}r(U_\ell)\cdot \prod_{j\in\supp(\bm\alpha)}\W^{(x_j)}(U_{P(j)})^{\alpha_{j}}\right]\\
	&=\bE_{U}\left[\prod_{\ell =1}^{m}\left(r(U_\ell) \prod_{j\in\supp(\bm\alpha):P(j)=\ell}\W^{(x_j)}(U_{\ell})^{\alpha_{j}}\right)\right]\\
	&=\prod_{\ell =1}^{m}\bE_{U}\left[r(U_\ell) \prod_{j\in\supp(\bm\alpha):P(j)=\ell}\W^{(x_j)}(U_{\ell})^{\alpha_{j}}\right].
\end{align*}
To estimate each term in the product let us consider the partition of $\supp(\bm\alpha)$. Fix a partition $\mathcal{S}=\{S_j\}$ of $\supp(\bm\alpha)$.
For any $P:[n]\to [m]$ we say $P$ \emph{induces} $\mathcal{S}$ if 
\[\{P^{-1}(\{j\})\cap \supp(\bm\alpha):j\in [m]\}=\mathcal{S}\]
that is
\[\forall j,\quad  |P(S_j)| = 1 \quad \text{and}  \quad \forall j\neq k,\quad P(S_j) \neq P(S_k)\,.\]
	
	Simple combinatorics give that for any partition $\mathcal{S}$ of $\supp(\bm\alpha)$ 
	\begin{equation}
		\label{AS1}
		\Pr[\textnormal{$P:[n]\to [m]$ induces $\mathcal{S}$}]=\frac{m(m-1)\cdots(m-|\mathcal{S}|+1)}{m^{|\supp(\bm\alpha)|}}.
	\end{equation}
	In particular, it can be represented as 
	\begin{equation}
		\label{AS2}
		\Pr[P \textnormal{ induces $\mathcal{S}$}]
		=
		\begin{cases}
			1+q_{|\mathcal{S}|,|\supp(\bm\alpha)|}\big(\tfrac{1}{m}\big) & \text{if } |\mathcal{S}|=|\supp(\bm\alpha)|\\
			q_{|\mathcal{S}|,|\supp(\bm\alpha)|}\big(\tfrac{1}{m}\big) & \text{if } |\mathcal{S}|<|\supp(\bm\alpha)|
		\end{cases}
	\end{equation}
	for some polynomials $q=q_{|\mathcal{S}|,|\supp(\bm\alpha)|}$ with $q(0)=0$ and $\deg(q)<|\supp(\bm\alpha)|$.

\begin{proof}[Proof of Proposition \ref{part1}]
As discussed above, the calculation of expectation in \eqref{eq:key identity monomial} depends on the partition $\mathcal{S}$ of $\supp(\bm\alpha)$. Clearly, $|\mathcal{S}|\le |\supp(\bm\alpha)|=k$. In the special case when $|\mathcal{S}|=|\supp(\bm\alpha)|$, $\mathcal{S}$ is a singleton partition. In this case, we may write $\{j_\ell\}=P^{-1}(\ell),\ell\in [m]$ for $P$ that induces $\mathcal{S}$. For such $P$ we may calculate
\begin{equation*}
	\bE_{U}\left[r(U_\ell) \prod_{j\in\supp(\bm\alpha):P(j)=\ell}\W^{(x_j)}(U_{\ell})^{\alpha_{j}}\right]
	=\bE_{U}\left[r(U_\ell)\right]
	=\frac{1}{D}
\end{equation*}
if $j_\ell\notin \supp(\bm\alpha)$, according to \eqref{eq:expectation of r}; and
\begin{equation*}
	\bE_{U}\left[r(U_\ell) \prod_{j\in\supp(\bm\alpha):P(j)=\ell}\W^{(x_j)}(U_{\ell})^{\alpha_{j}}\right]
		=\bE_{U}\left[r(U_\ell)\W^{(x_{j_\ell})}(U_{\ell})^{\alpha_{j_\ell}}\right]
	=\frac{x_{j_\ell}^{\alpha_{j_\ell}}}{D}
\end{equation*}
if $j_\ell\in \supp(\bm\alpha)$, according to \eqref{eq:expectation of product r and w}.
 All combined, we find 
		\begin{equation}\label{eq:claim1}
	\bE_{U}\left[\prod_{\ell=1}^{m}r(U_{\ell}) \prod_{j=1}^{n}\W^{(x_j)}(U_{P(j)})^{\alpha_{j}}\right] 
	=\prod_{\ell:j_\ell\in \supp(\bm\alpha)}\frac{x_{j_\ell}^{\alpha_{j_\ell}}}{D}\prod_{\ell:j_\ell\notin \supp(\bm\alpha)}\frac{1}{D}
	=\frac{ \x^\alpha}{D^k}.
\end{equation}

\medskip

For general $P$ that induces $\mathcal{S}$ with $|\mathcal{S}|<|\supp(\bm\alpha)|$, it is not easy to calculate 
\begin{equation*}
		\bE_{U}\left[\prod_{\ell=1}^{m}r(U_{\ell})\prod_{j=1}^{n}\W^{(x_j)}(U_{P(j)})^{\alpha_{j}}\right].
\end{equation*}
We claim that there exists a constant $E(\mathcal{S},\bm\alpha,\x,\y)$ independent of $m$ such that 
\begin{equation}\label{eq:claim2}
	\bE_{U}\left[\prod_{\ell=1}^{m}r(U_{\ell})\prod_{j=1}^{n}\W^{(x_j)}(U_{P(j)})^{\alpha_{j}}\right]
	=D^{-m}E(\mathcal{S},\bm\alpha,\x,\y).
\end{equation}
The exact value of $E(\mathcal{S},\bm\alpha,\x,\y)$ is not important; we will find a way to eliminate it afterwards. What matters to us is that it is independent from $m$.

To see the claim, note that for $P$ that induces $\mathcal{S}$ with $|\mathcal{S}|<|\supp(\bm\alpha)|$, 
\begin{align*}
	&\phantom{{}={}}\bE_{U}\left[\prod_{\ell=1}^{m}r(U_{\ell}) \prod_{j=1}^{n}\W^{(x_j)}(U_{P(j)})^{\alpha_{j}}\right]\\
	&=\prod_{\ell =1}^{m}\bE_{U}\left[r(U_\ell) \prod_{j\in \supp(\bm\alpha)\cap P^{-1}(\ell)}\W^{(x_j)}(U_{\ell})^{\alpha_{j}}\right]\\
	&= \frac{1}{D^{m-|\mathcal{S}|}}\underbrace{\prod_{\ell:\supp(\bm\alpha)\cap P^{-1}(\ell)\neq\emptyset}\bE_{U}\left[r(U_\ell) \prod_{j\in \supp(\bm\alpha)\cap P^{-1}(\ell)}\W^{(x_j)}(U_{\ell})^{\alpha_{j}}\right]}_{(*)}
\end{align*}
where we used \eqref{eq:expectation of r} in the last equality. 
We observe that the expectation ($*$) may depend on $(\mathcal{S}, \bm\alpha,\x,\y)$, but does not depend on the specific $P$ inducing $\mathcal{S}$, nor on $m$.
Thus we may define $E(\mathcal{S},\bm\alpha,\x,\y)$ by setting $D^{-|\mathcal{S}|}E(\mathcal{S},\bm\alpha,\x,\y)$ equal to ($*$).

\medskip

According to the above discussion and \eqref{AS2}, we have (we omit the constraint that $\mathcal{S}$ is a partition of $\supp(\bm\alpha)$ in the summation for notational convenience)

\begin{align*}
	&\phantom{{}={}}\bE_{U, P}\left[\prod_{\ell=1}^{m}r(U_{\ell})\prod_{j=1}^{n}\W^{(x_j)}(U_{P(j)})^{\alpha_{j}}\right]\\[0.5em]
	&=\sum_{\mathcal{S} }	\bE_{U, P}\left[\prod_{\ell=1}^{m}r(U_{\ell})\prod_{j=1}^{n}\W^{(x_j)}(U_{P(j)})^{\alpha_{j}}\,\Bigg|\,P\textnormal{ induces $\mathcal{S}$}\right]\Pr[P\textnormal{ induces $\mathcal{S}$}]\\[1em]
	&=\sum_{|\mathcal{S}|=|\supp(\bm\alpha)|} \frac{\x^{\bm\alpha}}{D^m}\Pr[P\textnormal{ induces $\mathcal{S}$}]\\[-0.2em]
	&\qquad \qquad+\sum_{|\mathcal{S}|<|\supp(\bm\alpha)|} \frac{E(\mathcal{S},\bm\alpha,\x,\y)}{D^m} \Pr[\textnormal{P induces $\mathcal{S}$}]\\[1em]
	&=\sum_{|\mathcal{S}|=|\supp(\bm\alpha)|} \frac{\x^{\bm\alpha}}{D^m}	\left[1+q_{|\mathcal{S}|,|\supp(\bm\alpha)|}\big(\tfrac{1}{m}\big) \right]\\[-0.2em]
	&\qquad \qquad+\sum_{|\mathcal{S}|<|\supp(\bm\alpha)|} \frac{E(\mathcal{S},\bm\alpha,\x,\y)}{D^m}	q_{|\mathcal{S}|,|\supp(\bm\alpha)|}\big(\tfrac{1}{m}\big)\\[1em]
	&=\frac{\x^{\bm\alpha}}{D^m}+\frac{1}{D^m}p_{\bm\alpha, \bs x, \bs y}\big(\tfrac{1}{m}\big)
\end{align*}
with
\begin{align*}
p_{\bm\alpha, \bs x, \bs y}(z)
&:=\sum_{|\mathcal{S}|=|\supp(\bm\alpha)|} \x^{\bm\alpha} q_{|\mathcal{S}|,|\supp(\bm\alpha)|}\big(z\big) \\
&\qquad \qquad +\sum_{|\mathcal{S}|<|\supp(\bm\alpha)|} E(\mathcal{S},\bm\alpha,\x,\y)	q_{|\mathcal{S}|,|\supp(\bm\alpha)|}\big(z\big).
\end{align*}
Recalling \eqref{AS2}, $p_{\bm\alpha, \bs x, \bs y}$ satisfies  $p_{\bm\alpha, \bs x, \bs y}(0)=0$ and $\deg(p_{\bm\alpha, \bs x, \bs y})<|\supp (\bm\alpha)|$.
\end{proof}

Proposition \ref{part1} generalizes immediately to low-degree polynomials by linearity.

\begin{prop}\label{thm:key identity}
		Let  $d\ge 1$ and suppose that $f$ is an analytic polynomial on $\D^n$ of degree at most $d$ with individual degree at most $ \go-1$. Then for any $\x=(x_1,\dots, x_n)\in \D^n$, there exists a polynomial $p=p_{f, \bs x, \bs y}$ of degree at most $d-1$ such that $p_{f, \bs x, \bs y}(0)=0$ and the following holds. For any $m\ge d$, let $P:[n]\to [m]$ be constructed by choosing for each $j\in [n]$ uniformly at random $P(j)\in [m]$. We have
	\begin{equation}\label{eq:key identity low degree f}
				f(\x)=D^m\bE_{U, P}\left[\prod_{\ell=1}^{m}r(U_\ell)\cdot f\left(W^{(\x)}(U_P)\right)\right]+p_{f, \bs x, \bs y}\left(\frac{1}{m}\right)\\
	\end{equation}
	where \[W^{(\x)}(U_P):=\left(\W^{(x_1)}(U_{P(1)}),\dots, \W^{(x_n)}(U_{P(n)})\right)\in \{y_0,\dots, y_{\go-1}\}^n = \bs y^n.\]
\end{prop}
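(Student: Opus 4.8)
\textbf{Proof proposal for Proposition \ref{thm:key identity}.}

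The plan is to reduce Proposition \ref{thm:key identity} to Proposition \ref{part1} by linearity, being careful about one subtlety: the random map $P:[n]\to[m]$ must be a \emph{single} sample shared across all monomials of $f$, yet Proposition \ref{part1} produces, for each monomial $\z^{\bm\alpha}$, an identity in which the same $P$ appears. Since \eqref{eq:key identity monomial} is an \emph{identity} (it holds after taking expectation over $P$), this is not actually an obstacle: I can write $f(\x)=\sum_{\bm\alpha}\widehat f(\bm\alpha)\,\x^{\bm\alpha}$, apply \eqref{eq:key identity monomial} to each monomial with the common parameter $m\ge d\ge |\supp(\bm\alpha)|$ (the hypothesis $m\ge k$ of Proposition \ref{part1} is satisfied because $\deg f\le d$ forces $|\supp(\bm\alpha)|\le d\le m$), and sum. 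Concretely, first I would fix $\x\in\D^n$ and $\y\in\D^\go$ and recall that $f\big(W^{(\x)}(U_P)\big)=\sum_{\bm\alpha}\widehat f(\bm\alpha)\prod_{j=1}^n \W^{(x_j)}(U_{P(j)})^{\alpha_j}$ by definition of the vector $W^{(\x)}(U_P)$ and expansion of the polynomial.

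Next I would take $\bE_{U,P}$ of $\prod_{\ell=1}^m r(U_\ell)\cdot f\big(W^{(\x)}(U_P)\big)$ and interchange the (finite) sum over $\bm\alpha$ with the expectation, obtaining
\begin{equation*}
\bE_{U,P}\left[\prod_{\ell=1}^m r(U_\ell)\cdot f\big(W^{(\x)}(U_P)\big)\right]
=\sum_{\bm\alpha}\widehat f(\bm\alpha)\,\bE_{U,P}\left[\prod_{\ell=1}^m r(U_\ell)\prod_{j=1}^n \W^{(x_j)}(U_{P(j)})^{\alpha_j}\right].
\end{equation*}
By Proposition \ref{part1}, each summand on the right equals $D^{-m}\big(\x^{\bm\alpha}+p_{\bm\alpha,\bs x,\bs y}(1/m)\big)$. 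Multiplying through by $D^m$ and summing against $\widehat f(\bm\alpha)$ yields
\begin{equation*}
D^m\,\bE_{U,P}\left[\prod_{\ell=1}^m r(U_\ell)\cdot f\big(W^{(\x)}(U_P)\big)\right]
=\sum_{\bm\alpha}\widehat f(\bm\alpha)\,\x^{\bm\alpha}+\sum_{\bm\alpha}\widehat f(\bm\alpha)\,p_{\bm\alpha,\bs x,\bs y}\big(\tfrac1m\big)
=f(\x)+p_{f,\bs x,\bs y}\big(\tfrac1m\big),
\end{equation*}
where I define $p_{f,\bs x,\bs y}(z):=\sum_{\bm\alpha}\widehat f(\bm\alpha)\,p_{\bm\alpha,\bs x,\bs y}(z)$. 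Rearranging gives exactly \eqref{eq:key identity low degree f}.

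Finally I would verify the claimed properties of $p_{f,\bs x,\bs y}$. Since each $p_{\bm\alpha,\bs x,\bs y}$ satisfies $p_{\bm\alpha,\bs x,\bs y}(0)=0$, the finite linear combination $p_{f,\bs x,\bs y}$ also vanishes at $0$; and since $\deg p_{\bm\alpha,\bs x,\bs y}<|\supp(\bm\alpha)|\le d$, we get $\deg p_{f,\bs x,\bs y}\le d-1$. The only point requiring a word of care — and the step I expect to be the mildest ``obstacle'' — is the bookkeeping that a single realization of $P$ works simultaneously for every $\bm\alpha$: this is automatic because we only ever use the \emph{expectation} over $P$, which is linear, so no joint-distribution issue arises. (If one wanted a version valid for each fixed $P$ before averaging, it would fail, but the statement only asserts the averaged identity.) This completes the reduction.
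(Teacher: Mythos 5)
Your proof is correct and takes essentially the same route as the paper: expand $f$ into monomials, apply Proposition \ref{part1} to each and sum, taking $p_{f,\bs x,\bs y}=\sum_{\bm\alpha}\widehat f(\bm\alpha)\,p_{\bm\alpha,\bs x,\bs y}$ (the paper simply unpacks this sum using the explicit formula for $p_{\bm\alpha,\bs x,\bs y}$). Your explicit note that linearity of expectation makes the shared randomness in $P$ a non-issue is a valid and slightly more careful observation than what the paper records.
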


\begin{proof}
This follows immediately from \eqref{eq:key identity monomial} by linearity and choosing 
\begin{align*}
p_{f,\bs x, \bs y}(z)
=&\sum_{|\bm\alpha|\le d}\sum_{|\mathcal{S}|=|\supp(\bm\alpha)|} a_{\bm\alpha}\x^{\bm\alpha} q_{|\mathcal{S}|,|\supp(\bm\alpha)|}\big(z\big) \\
&\qquad \qquad +\sum_{|\bm\alpha|\le d}\sum_{|\mathcal{S}|<|\supp(\bm\alpha)|} a_{\bm\alpha}E(\mathcal{S},\bm\alpha,\x,\y)	q_{|\mathcal{S}|,|\supp(\bm\alpha)|}\big(z\big)
\end{align*}
for $f(\bm z)=\sum_{|\bm\alpha|\le d} a_{\bm\alpha}z^{\bm\alpha}$.
\end{proof}

Now we are ready to finish the proof of Theorem \ref{thm:interpolation} when all $Z_j$'s are identical. 

\begin{proof}[Proof of Theorem \ref{thm:interpolation} when $Z_j$'s are identical]
Assume that 
\[Z_j=\{y_0,\dots, y_{\go-1}\}\in \D^{\go}\]
for all $j$, and we shall follow the above notations, \textit{e.g.} $\y=(y_0,\dots, y_{\go-1})$ and $\x=(x_1,\dots, x_n)\in \D^n$ is fixed.
Choose positive integers 
\begin{equation*}
	    d=m_0 < m_1<\dots < m_{d-1}
\end{equation*}
and real numbers $a_0,\dots, a_{d-1}$ in such a way that 
    \begin{equation}
	\label{sys}
	\sum_{j=0}^{d-1} a_j=1 \qquad\text{ and }\qquad\sum_{j=0}^{d-1} \frac{a_j }{m_j^k} =0, \quad k=1, \dots, d-1\,.
\end{equation}

According to Proposition \ref{thm:key identity}, we have for all $m\ge d$ that 
\begin{equation*}
			f(\x)=D^m\bE_{U, P}\left[\prod_{\ell=1}^{m}r(U_\ell)\cdot f\left(W^{(\x)}(U_P)\right)\right]+p_{f, \bs x, \bs y}\left(\frac{1}{m}\right).
\end{equation*}
Applying this identity to $m=m_0,\dots, m_{d-1}$ as above, we get
\begin{align}
f(\x)&=\sum_{j=0}^{d-1}a_{j}f(\x)\nonumber\\
&=\sum_{j=0}^{d-1}a_{j}D^{m_j}\bE_{U, P}\left[\prod_{\ell=1}^{m_j}r(U_\ell)\cdot f\left(W^{(\x)}(U_P)\right)\right]+\sum_{j=0}^{d-1}a_j p_{f, \bs x, \bs y}\left(\frac{1}{m_j}\right)\nonumber\\
&=\sum_{j=0}^{d-1}a_{j}D^{m_j}\bE_{U, P}\left[\prod_{\ell=1}^{m_j}r(U_\ell)\cdot f\left(W^{(\x)}(U_P)\right)\right]\label{eq:f a_j}
\end{align}
where in the first and last equalities we used \eqref{sys}.
Equation \eqref{eq:f a_j} is the explicit form of \eqref{eq:interpthm}.
It remains to instantiate the $m_j$'s and control $\sum_{j=0}^{d-1}|a_{j}|D^{m_j}$.

Let us choose simple $m_j$'s and estimate the $|a_j|$'s.
For this, recall that \eqref{sys} is equivalent to 
\[V(a_0,\dots, a_{d-1})^T=(1,0,\dots, 0)^T\] 
where $V=[a_{jk}]_{j,k=0}^{d-1}$ is the Vandermonde matrix with $a_{jk}=\big(\frac{1}{m_k}\big)^j$. Denoting $[b_{jk}]_{j,k=0}^{d-1}$ the inverse of $V$, then $a_j=b_{j,0}$.
Now let us estimate the absolute sum of $b_{j,0}$'s.

Now we choose $m_j=d+j, 0\le j\le d-1$. Then 
\begin{equation*}
\max_{0\le j\le d-1}|m_j^{-1}|\le \frac{1}{d},\qquad \min_{0\le j<k\le d-1}|m_j^{-1}-m_k^{-1}|\ge \frac{1}{4d^2}.
\end{equation*}
By \eqref{eq:inverse of vandermonde}, we have 
\begin{equation*}
|a_j|=|b_{j,0}|
=\frac{\prod_{0\le k\le d-1:k\neq j}m_k^{-1}}{\prod_{0\le k\le d-1:k\neq j}|m_j^{-1}-m_k^{-1}|}
=\prod_{0\le k\le d-1:k\neq j}\frac{m_j}{|m_k-m_j|}.
\end{equation*}
Recalling $m_j=d+j$, one obtains the estimate
\begin{equation*}
|a_j|=\frac{(d+j)!}{j!(d-1-j)!}
=\frac{d-j}{d+j}\frac{(d+j)^d}{d!}\binom{d}{j}\le C\cdot(2e)^d\left(\frac{d+j}{d}\right)^{d}
\le C\cdot(4e)^d,
\end{equation*}
for all $0\le j\le d-1$ with some universal $C>0$, 
and thus 
\begin{equation}\label{ineq:estimate of a_j D}
\sum_{j=0}^{d-1}|a_{j}|D^{m_j}\le Cd(4eD^2)^d\le C(\go,\eta)^d,
\end{equation}
for some $C(\go,\eta)>0$ depending on $\go$ and $\eta$ only. 
Therefore, we have shown that 
\begin{align*}
|f(\x)|\le C(\go,\eta)^d\max_{\z\in \{y_0,\dots, y_{\go-1}\}^n} |f(\z)|
\end{align*}
for all $\x\in \D^n$. This finishes the proof of Theorem \ref{thm:interpolation} when all the $Z_j$'s are the same. 
\end{proof}

\begin{proof}[Proof of Theorem \ref{thm design} when $Z_j$'s are identical]
Let $\bs x$ maximize $|f|$ on $\D^n$.
Then directly from \eqref{eq:f a_j} we conclude
\begin{align*}
\|f\|_{\D^n}=|f(\x)|&\le \sum_{j=0}^{d-1}|a_{j}|D^{m_j}\cdot\max_{\z\in \{y_0,\dots, y_{\go-1}\}^n} |f(\z)|\\
&\leq C(K,\eta)^d\|f\|_{Y_n}\qedhere\,.
\end{align*}

\end{proof}

  \subsection{\boldmath Improved estimate for groups \texorpdfstring{$\Om_\go^n$}{Ω\_K\string^n}}
\label{OmNn}

Now let us prove the second claims of Theorems \ref{thm design} and \ref{thm:interpolation}, that is, we may choose the constant $C$ in \eqref{ineq:remez-plain} to be $C(d,\go)\le \big(\mathcal{O}(\log(\go)\big)^{2d}$ when all $Z_j=\Om_\go$, $j=1, \dots, n$.
This is based on the following lemma. 

\begin{lemma}
    \label{lem:dft}
    Suppose $z\in \D$, $\om=e^{\frac{2\pi \iu}{\go}}$.
    Then there exists $\bm c:= (c_0,\ldots, c_{\go-1})$ such that for all $k=0,1,\ldots, \go-1$,
    \[z^k=\sum_{j=0}^{\go-1}c_j(\omega^j)^k.\]
    Moreover, $\|\bm c\|_1\leq B\log(\go)$ for a universal constant $B$.
\end{lemma}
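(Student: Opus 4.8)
\textbf{Proof proposal for Lemma \ref{lem:dft}.}

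The plan is to solve the linear system explicitly using the discrete Fourier transform and then control the $\ell^1$-norm of the resulting coefficient vector. Since the $\go$ points $\{1,\om,\om^2,\ldots,\om^{\go-1}\}$ are exactly the $\go$-th roots of unity, the Vandermonde matrix $V=[(\om^j)^k]_{j,k=0}^{\go-1}$ is (a scalar multiple of) the DFT matrix, so it is invertible and its inverse is given cleanly by $V^{-1}=\tfrac1\go[\om^{-jk}]_{j,k}$. Hence the unique solution to $\sum_{j=0}^{\go-1}c_j(\om^j)^k=z^k$ for $k=0,\ldots,\go-1$ is
\[
c_j=\frac1\go\sum_{k=0}^{\go-1}z^k\om^{-jk}=\frac1\go\sum_{k=0}^{\go-1}(z\om^{-j})^k=\frac1\go\cdot\frac{1-(z\om^{-j})^{\go}}{1-z\om^{-j}}=\frac1\go\cdot\frac{1-z^{\go}}{1-z\om^{-j}},
\]
where the geometric-series evaluation is valid because $z\om^{-j}\neq 1$ (as $|z|\le 1$; if $|z|=1$ and $z\om^{-j}=1$ one takes the obvious limiting value, or simply notes this case is covered by continuity). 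Using $|1-z^\go|\le 2$, the key estimate reduces to bounding $\sum_{j=0}^{\go-1}\frac{1}{|1-z\om^{-j}|}$ uniformly over $z\in\D$.

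The main work is therefore this last sum. First I would reduce to $|z|=1$: the function $z\mapsto\sum_j|1-z\om^{-j}|^{-1}$ need not be subharmonic, so instead I would argue directly. Writing $z=\rho e^{i\theta}$ with $\rho\le 1$, we have $|1-z\om^{-j}|\ge |1-e^{i(\theta-2\pi j/\go)}|$ is \emph{not} generally true, so a cleaner route is: fix $z\in\D$; the points $\om^{-j}$ are equally spaced on $\T$ with gap $2\pi/\go$, and $|1-z\om^{-j}|\ge \operatorname{dist}(z,\,\text{ray through }\om^{-j})$-type bounds are awkward. Instead, observe $|1-z\om^{-j}|=|z||\,\om^{-j}-\bar z^{-1}\cdot|z|^{2}\,|$... — rather than this, the crispest argument is: $|1-z\om^{-j}|\ge c\,|1-\om^{-j}w|$ is false in general, so I would instead just bound $|1-z\om^{-j}|$ from below by the distance from the point $z$ to the point $\om^j$ on the unit circle when $|z|$ is close to $1$, and handle small $|z|$ trivially (if $|z|\le 1/2$ then every term is $\ge 1/2$ and the sum is $\le 2\go$, but that is too weak). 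The honest approach: bound $|1-z\om^{-j}|\ge |\,1-|z|\,| + $ something is not enough either. So the real estimate I will use is the elementary inequality $|1-w|\ge \tfrac{2}{\pi}|\arg w|$ combined with $|1-w|\ge 1-|w|$: namely $|1-z\om^{-j}|\ge \max(1-|z|,\ \tfrac{2}{\pi}\operatorname{dist}(2\pi j/\go-\theta,\ 2\pi\Z))$ where $z=|z|e^{i\theta}$, valid since $z\om^{-j}=|z|e^{i(\theta-2\pi j/\go)}$ and $|1-|z|e^{i\phi}|\ge\max(1-|z|,\ |z|\cdot\tfrac2\pi|\phi|)\ge \ldots$; after a harmless reduction one gets $|1-z\om^{-j}|\gtrsim \max\!\big(1-|z|,\ \|j/\go-\theta/2\pi\|\big)$ where $\|\cdot\|$ is distance to $\Z$.

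Granting that lower bound, set $\delta=1-|z|\in[0,1]$ and let $j_0$ be the index minimizing $\|j/\go-\theta/2\pi\|$. The terms with $\|j/\go-\theta/2\pi\|\le\delta$ — there are $\mc O(1+\delta\go)$ of them — each contribute $\le 1/\delta$, for a total $\mc O(1/\delta+\go)$; and splitting the remaining indices dyadically by $\|j/\go-\theta/2\pi\|\in[2^t\delta,2^{t+1}\delta)$ (with $\mc O(1)$ extra issues if $\delta=0$, handled by starting the dyadic scale at $1/\go$), each band has $\mc O(2^t\delta\go)$ indices each contributing $\mc O(1/(2^t\delta))$, i.e. $\mc O(\go)$ per band, over $\mc O(\log\go)$ bands. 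Hence $\sum_j|1-z\om^{-j}|^{-1}=\mc O(\go\log\go)$ when $\delta$ is not tiny; when $\delta\lesssim 1/\go$ the first group gives $\mc O(\go)$ and the dyadic bands from scale $1/\go$ up to $1$ again give $\mc O(\go\log\go)$. In all cases $\sum_j|1-z\om^{-j}|^{-1}=\mc O(\go\log\go)$, so $\|\bm c\|_1\le \tfrac1\go\cdot 2\cdot \mc O(\go\log\go)=\mc O(\log\go)$, as claimed. The one point needing care — and the main obstacle — is making the lower bound on $|1-z\om^{-j}|$ fully rigorous uniformly in $|z|\le 1$ (in particular near $|z|=1$, where the naive $1-|z|$ bound degenerates and one must genuinely use angular separation); once that elementary inequality is in hand, the dyadic summation is routine.
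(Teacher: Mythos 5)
Your proposal starts exactly as the paper's proof does: invert the DFT to get
\[
c_j=\frac1\go\sum_{k=0}^{\go-1}(z\om^{-j})^k=\frac{1-z^\go}{\go(1-\om^{-j}z)},
\]
and try to bound the $\ell^1$-norm of $\bm c$ by summing $\frac{2}{\go|1-z\om^{-j}|}$. But there is a genuine gap in the next step. You assert that the problem \emph{reduces} to showing $\sum_{j}|1-z\om^{-j}|^{-1}=\mc O(\go\log\go)$ uniformly over $z\in\D$, and your final paragraph concludes this holds ``in all cases.'' That statement is false. Take $z$ on (or arbitrarily close to) the boundary, close to some $\om^{j_0}$: then $|1-z\om^{-j_0}|\to 0$ and that single term blows up, so the sum is not uniformly bounded. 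Your own lower bound $|1-z\om^{-j}|\gtrsim\max(1-|z|,\ \|j/\go-\theta/2\pi\|)$ is of no help at $j=j_0$, because there \emph{both} quantities $1-|z|$ and $\|j_0/\go-\theta/2\pi\|$ can be simultaneously tiny. Your dyadic bookkeeping inherits this: ``the terms with $\|j/\go-\theta/2\pi\|\le\delta$ each contribute $\le 1/\delta$'' gives a total $\mc O(1/\delta+\go)$, and $1/\delta$ is unbounded as $\delta=1-|z|\to 0$; the later claim that this first group ``gives $\mc O(\go)$'' when $\delta\lesssim 1/\go$ does not follow. You flag ``the naive $1-|z|$ bound degenerates'' as the main obstacle, but you do not actually resolve it, and angular separation cannot resolve it for the nearest index.

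The missing ingredient — which the paper uses — is elementary: from the geometric-series form $c_j=\frac1\go\sum_{k=0}^{\go-1}(z\om^{-j})^k$ one has the \emph{unconditional} bound $|c_j|\le 1$, valid for all $z\in\D$ and all $j$, including when $z\om^{-j}$ is $1$ or close to it. So one bounds $|c_j|\le\min\{1,\ \frac{2}{\go|1-z\om^{-j}|}\}$: use the crude bound $|c_j|\le 1$ for the $\mc O(1)$ nearest roots of unity (the paper uses the two nearest), and use the reciprocal-distance bound only for the remaining $j$, where $|\om^j-z|\gtrsim (\text{gap})/\go$ with gap $\ge 1$. The remaining sum is then $\frac1\go\sum_{j\ge 2}\frac{\mc O(1)}{j/\go}=\mc O\bigl(\frac1\go\cdot\go\log\go\bigr)=\mc O(\log\go)$; adding the $\mc O(1)$ from the nearest indices gives $\|\bm c\|_1=\mc O(\log\go)$. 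Once you insert the $\min\{1,\cdot\}$ truncation for the nearest index or two, your dyadic estimate for the tail is fine (it is a slightly heavier version of the paper's direct summation $\sum_{j\ge 2}1/j\lesssim\log\go$), but without that truncation the argument does not close.
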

\begin{proof}
A direct computation shows that 
\begin{equation}
c_j=\frac{1}{\go}\sum_{k=0}^{\go-1}\omega^{-jk}z^k
=\frac{1-z^\go}{\go(1-\omega^{-j}z)},\qquad 0\le j\le \go-1.
\end{equation}
Since $z\in \D$, we have by triangle inequality that
\begin{equation}
|c_j|\le \min\left\{1, \frac{2}{\go|\omega^j-z|}\right\},\qquad 0\le j\le \go-1.
\end{equation}
By symmetry, we may assume that $\omega^0=1$ is a closest point in $\Omega_\go$ to $z$ and $\omega$ is the next-closest point in $\Omega_\go$ to $z$. It can be that $z=1$, so we use the upper bound $|c_0|\le 1$. We also estimate $|c_1|\le 1$. For $2\le j\le \go-1 $, we use the estimate
$$
\sum_{j=2}^{\go-1}\frac{2}{\go|\omega^j-z|}
\le 2\sum_{j=2}^{\lceil \frac{\go-1}{2}\rceil }\frac{2}{\go|\omega^j-z|}.
$$
For $2\le j\le \lceil \frac{\go-1}{2}\rceil$, we use the estimate
$$
|\omega^j-z|\ge 2\sin\left(\frac{(j-1)\pi}{\go}\right)\ge \frac{4}{\pi}\cdot \frac{(j-1)\pi}{\go}=\frac{4(j-1)}{\go}.
$$
All combined, we get 
\begin{equation}
    \sum_{j=0}^{\go-1}|c_j|
    \le 1+1+2\sum_{j=2}^{\lceil \frac{\go-1}{2}\rceil }\frac{2}{\go|\omega^j-z|}
    \le 2+2\sum_{j=2}^{\lceil \frac{\go-1}{2}\rceil }\frac{2}{4j}
    \le 2+\sum_{j=2}^{\lceil \frac{\go-1}{2}\rceil }\frac{1}{j}.
\end{equation}
Then the proof is finished using the estimate $\sum_{k=1}^{L}1/k\le \log (L)+1$. 
\end{proof}

This gives a better upper bound of $L$, namely $L\le C\log (\go)$ improving \eqref{N-N}. So 
\begin{equation}\label{ineq:improved D}
D=4L+1\le 4C\log (\go)+1
\end{equation}
and we obtain the desired bound following the proof of the previous subsection (see \eqref{ineq:estimate of a_j D}).

\subsection{Proof of Theorems \ref{thm design} and \ref{thm:interpolation}: the general case}
	\label{non-direct-pr}
	
	In this subsection we prove Theorems \ref{thm design} and \ref{thm:interpolation} for general $Y_n=\prod_{\ell=1}^{n}Z_\ell\subset \D^n$ with 
	\[
	Z_{\ell}=\{y_{\ell,0}, \dots, y_{\ell, \go-1}\},\qquad 1\le \ell\le n
	\]
	uniformly separated by $\eta>0$ as stated in the theorem; that is,
	\begin{equation*}
	\min_{1\le \ell \le n}\;\min_{0\le j<k\le \go-1}|y_{\ell, j}-y_{\ell,k}|\ge \eta.
	\end{equation*}
	
	The proof is essentially the same as the case when $Z_\ell$'s are identical. In fact, fix $1\le \ell \le n$ and put $\y_\ell=(y_{\ell,0},\dots, y_{\ell,\go-1})\in \D^\go$. As before, for any $x\in \D$ there exist $c_k(x)=c_k(\y_{\ell},x)$ such that 	\begin{equation}
	\label{c-s} 
	\sum _{k=0}^{\go-1} c_k(\y_{\ell},x) y_{\ell,j}^j = x^j,\quad 0\le j\le \go-1,
	\end{equation}
	and we have the same universal bound 
	\begin{equation*}
	\sum _{k=0}^{\go-1} |c_k(\y_{\ell},x)|\le \go \left(\frac{2}{\eta}\right)^{\go-1}
	\end{equation*}
	as in \eqref{ineq:bound of sum of cj}. This allows us to choose the same $L$ and $D=4L+1$ as in subsection \ref{subsect:preparations} to define the functions $r$ and $\W^{(x)}$. Recall that $r:[0,D]\to \{\pm 1, \pm \iu\}$ is the same as before, since it depends only on $D$.
	The definition of $\W^{(x)}_{\y_{\ell}}:[0,D]\to Z_\ell$ depends on $x\in \D$ and the reference set $Z_\ell$ as well as some order $\y_\ell$. Then we still have the following identity as in \eqref{eq:expectation of product r and w}
\begin{equation}
	\bE\left[r(U)\W^{(x)}_{\y_\ell}(U)^{\alpha}\right]=\frac{1}{D}x^{\alpha},\qquad x\in \D, \quad0\le \alpha\le \go-1
\end{equation}
for all $1\le \ell \le n$. Repeating the previous proof word by word, as in \eqref{eq:key identity low degree f} we arrive at 

	\begin{equation}
				f(\x)=D^m\bE_{U, P}\left[\prod_{\ell=1}^{m}r(U_\ell)\cdot f\left(W_{Y_n}^{(\x)}(U_P)\right)\right]+p_{f,\bs x, Y_n}\left(\frac{1}{m}\right)\\
	\end{equation}
	for any $\x=(x_1,\dots, x_n)\in \D^n$ and $f:\D^n\to \C$ in Theorem \ref{thm:interpolation}, where \[W_{Y_n}^{(\x)}(U_P):=\left(\W_{\y_1}^{(x_1)}(U_{P(1)}),\dots, \W_{\y_n}^{(x_n)}(U_{P(n)})\right)\in Y_n,\]
 $p_{f,\bs x, Y_n}(0)=0$ and $p_{f,\bs x, Y_n}$ is of degree at most $d-1$ as before.
    Here the notation $W_{Y_n}^{(\x)}$ might not be a perfect choice as it depends on the some orders of each $Z_\ell$, as indicated in its definition.
    But we abuse the notation here since we only need the fact that it takes values in $Y_n$.
	The rest is the same as the end of subsection \ref{subsect:partial} and thus we finish the proof of Theorem \ref{thm:interpolation}.
    The general case of Theorem \ref{thm design} follows immediately.

	
		\section{Small sampling sets}
\label{sect:small design}

In this section we show sampling sets of cardinality $C(1 + \varepsilon)^n$ exist for arbitrary $\varepsilon > 0$.
Denote by $C(d,\go,\eta)$ the constant obtained in Theorem \ref{thm design}.

\begin{proof}[Proof of Theorem \ref{thm rest design0}]
For any $n\ge 1$, let $k \geq 1$ be an integer such that 
\begin{equation}
    \label{eq k condition}
    ((d+1)k^d)^{1/k} \leq 1 + \varepsilon\,.
\end{equation}
We temporarily assume first that $n$ is a multiple of $k$, $n=n'k$. The reader should think that $n'$ is very large, and $k$ is of the order $\frac{d}{\varepsilon}\log\frac{d}{\varepsilon}$ which is not too large. We are going to choose the desired sampling set in the following way. Write 
\[
\prod_{j=1}^{n}Z_j=\prod_{j=0}^{n'-1}Z_{jk+1}\times \cdots \times Z_{jk+k},
\]
and for each $0\le j\le n'-1$, we are going to choose $M$ (to be defined later) points in $Z_{jk+1}\times \cdots \times Z_{jk+k}$. 

More precisely, define (denoting $\|\bm\alpha\|_p$ the $\ell^p$-norm of a vector $\bm\alpha$)
\begin{equation}
\label{Lambda}
    \Lambda= \{\bm\alpha \in \N^k\,\colon \|\bm\alpha\|_1 \leq d, \|\bm\alpha\|_\infty \leq \go-1\}\,,
\end{equation}
so that $\{\z^{\bm\alpha}\,: \, \bm\alpha \in \Lambda\}$ is a $\mathbb{C}$-vector space basis of the polynomials of degree at most $d$ and individual degree at most $\go-1$ in $k$ variables $z_1, \dotsc, z_k\in \D$. The cardinality of $\Lambda$ is
\begin{equation}
    \label{eq M size}
    M := |\Lambda| \leq (d+1) k^d\,.
\end{equation}
Let $\Y \in (\D^k)^M$. We will write 
\[
    \Y = (\y_1, \dotsc, \y_M)\,, \quad  \y_j = (y_{j 1}, y_{j 2}, \dotsc, y_{j k})\in \D^k\,.
\]
For any $\z =(z_1, \dots, z_k)\in \D^k$, we want to solve the system of $M$ equations
\begin{equation}
    \label{eq system}
    \z^{\bm\alpha}=z_1^{\al_1}\dots z_k^{\al_k} = \sum_{j = 1}^M c_{j}(\z) \y_j^{\bm\alpha}=\sum_{j = 1}^M c_{j}(\z) \prod_{r=1}^k y_{jr}^{\alpha_r} \,, \quad \bm\alpha \in \Lambda
\end{equation}
where $c_j(\z)=c_j(\Y; \z)$ depends on $\Y$ and $\z$ if it exists. 
The system is solvable for all $\z \in \D^k$ whenever the determinant
\begin{equation}
    \label{eq P def}
    P(\Y) := \det [\y_{j}^{\bm\alpha}]_{\substack{1\le j\le M, \bm\alpha \in \Lambda}}
    =\det [\y_{j}^{\bm\beta_i}]_{\substack{1\le i,j\le M}}
    =\sum_{\sigma \in S_M} \sgn(\sigma) \prod_{j =1}^M \y_{j}^{\bm\beta_{\sigma(j)}}
\end{equation}
does not vanish,
where $\{\bm\beta_i:1\le i\le M\}$ is some reordering of $\{\bm\alpha: \bm\alpha\in \Lambda\}$, and $S_M$ denotes the permutation group on $M$ letters. If this is the case, we can further explicitly express the solutions of \eqref{eq system} using Cramer's rule as
\[
    c_j(\z)=c_j(\Y; \z)= \frac{P_j(\Y, \z)}{P(\Y)}\,,
\]
where $P_j(\Y, \z)$ is the determinant of the $M\times M$ matrix obtained by replacing the $j$-th column of $[\y_{j}^{\bm\beta_i}]_{\substack{1\le i, j\le M}}$ by $(\z^{\bm\alpha})_{\bm\alpha\in \Lambda}=(\z^{\bm\beta_i})_{1\le i\le M}$. All the coefficients of this matrix are contained in the unit disc, so by a theorem of Hadamard \cite{Hadamard1893} we have
\[
    |P_j(\Y,\z)| \leq M^{M/2}\,.
\]
On the other hand, the monomials in \eqref{eq P def} for different $\sigma$ are all different, hence using orthogonality of monomials in $L^2(\T^{kM})$ we obtain 
\[
    \int_{\Y \in \T^{kM}} |P(\Y)|^2 \, \mathrm{d}\Y = M!\,.
\]
Here we equip $\T^{kM}$ with the uniform probability measure $d\Y$. 
Therefore, for this analytic polynomial $P$ over $\D^{kM}$ of degree at most $dM$ and individual degree at most $\go-1$, we have 
\begin{equation*}
\|P\|_{\D^{kM}}=\|P\|_{\T^{kM}}\ge \|P\|_{L^2(\T^{kM},d\Y)}=\sqrt{M!}.
\end{equation*}
By Theorem \ref{thm design}, for any $1\le i_1, \dotsc, i_k\le n$ and the sets $Z_{i_1}, \dotsc, Z_{i_k}$, there exists 
$\underline{\Y} \in (Z_{i_1} \times \dotsb \times Z_{i_k})^M$ with
\begin{equation}
    \label{eq Y condition}
    |P(\underline{\Y} )| \geq C(dM,\go,\eta)^{-1} \sqrt{M!}>0\,.
\end{equation}
For $\underline{\Y}$ satisfying \eqref{eq Y condition}, the above system \eqref{eq system} is solvable and we can estimate the size of the solutions $c_j(\underline{\Y};\z)$ of \eqref{eq system}:
\begin{align}
    |c_j(\underline{\Y};\z)|  &\leq \frac{|P_j(\underline{\Y}, z)|}{|P(\underline{\Y})|} \leq C(dM,\go,\eta) \frac{M^{M/2}}{\sqrt{M!}}                  \notag
    \\
    &\leq C(\go,\eta)^{dM} \frac{M^{M/2}}{\left(M/e\right)^{M/2}} \le C(\go,\eta)^{dM}e^{M/2}.
    \label{eq cj est}
\end{align} 
Note that $c_j(\underline{\Y};\z)$ depends on $\underline{\Y}$ and $\z$ but this upper bound of $|c_j(\underline{\Y};\z)|$ does not.

We now construct our sampling set. Recall that $n = n'k$ is a multiple of $k$. For each $s = 0, \dotsc, n'-1$, consider a tuple of points 
\[
    Y^{(s)} = (\y_{1}^{(s)}, \dotsc, \y_{M}^{(s)}) \in (Z_{sk+1} \times \dotsb \times Z_{sk + k})^M
\]
such that \eqref{eq Y condition} holds for $Z_{i_j} = Z_{sk +j}, j=1, \dots, k$.  These are the $M$ points that we choose in $Z_{sk+1} \times \dotsb \times Z_{sk + k}$. Our sampling set
will be 
\[
    \mathbf{Y} \coloneqq \prod_{s=0}^{n'-1} Y^{(s)}\,,
\]
which satisfies, by \eqref{eq k condition} and \eqref{eq M size},
\[
    |\mathbf{Y}| \leq M^{n'} \leq ((d+1)k^d)^{n/k} \leq (1 + \varepsilon)^n\,.
\]

It remains to show that $\mathbf{Y}$ is a sampling set with dimension-free constant.
The argument is essentially the same as in the proof of Theorem \ref{thm design}; the only difference is that the functions $\W$ take values in $Y^{(s)}\subset\D^{kM}$. 
Let $f(\z)=\sum_{\bm\alpha}a_{\bm\alpha}\z^{\bm\alpha}$ be a polynomial in $n$ variables of degree at most $d$ and individual degree at most $\go-1$.
Fix 
\[
\z =(\z_0,\dots, \z_{n'-1})\in \D^{n}=(\D^{k})^{n'} \qquad \textnormal{with} \qquad \z_{s}\in \D^{k}
\]
and $\bm\alpha$ such that $a_{\bm\alpha}\neq 0$. Then $\z^{\bm\alpha}=\prod_{s=0}^{n'-1}\z_{s}^{\bm\alpha_s}$.

\medskip

Since each $Y^{(s)}$ satisfies \eqref{eq Y condition}, the determinant of the system \eqref{eq system} does not vanish.
So we have
\[
    \z_s^{\bm\alpha_s} = \sum_{j = 1}^M c_{j} (\z_{s}) (\y_{j}^{(s)})^{\bm\alpha_s},\qquad \boldsymbol{\alpha} _{s}\in \Lambda
\]
and by \eqref{eq cj est} 
\begin{equation}
    \label{eq L def}
    \sum_{j = 1}^M |c_{j} (\z_{s}) | \leq M C(dM,\go,\eta) e^{M/2} \eqqcolon L\,.
\end{equation}
As before, $L$ is independent of $Y^{(s)}$ and $\z$.

Now  for the convenience of the reader we sketch the construction in Section \ref{sect:designs} with minor modifications. 
Again, we write each complex number $c_{j} (\z_{s})$ in the following form
\begin{equation*}
	c_{j} (\z_{s})=c_{j}^{(1)}(\z_{s})-c_{j}^{(-1)} (\z_{s})+\iu c_{j}^{(\iu)} (\z_{s})-\iu c_{j}^{(-\iu)} (\z_{s}),
\end{equation*}
where all $c_{j}^{(r)} (\z_{s})$'s are non-negative such that 
\[
c_{j}^{(1)} (\z_{s})=(\Re c_{j} (\z_{s}))_+,\qquad c_{j}^{(-1)}(\z_{s})=(\Re c_{j} (\z_{s}))_-, 
\]
and
\[
c_{j}^{(\iu)} (\z_{s})=(\Im c_{j} (\z_{s}))_+,\qquad c_{j}^{(-\iu)} (\z_{s})=(\Im c_{j} (\z_{s}))_-.
\]
Same as before, we may choose non-negative $t^{(1)}(\z_{s})= t^{(-1)}(\z_{s}), t^{(\iu)}(\z_{s})=  t^{(-\iu)}(\z_{s})$ in such a way that
	\begin{equation}	\label{Ct-i0 small design}
		\sum_{j=1}^{M}c_{j}^{(1)} (\z_{s})+ t^{(1)}(\z_{s})  =L+1,  
	\end{equation}
\begin{equation}	\label{Ct-i small design}
	\sum_{j=1}^{M}c_{j}^{(r)} (\z_{s})+ t^{(r)}(\z_{s}) =L, \quad r=-1, \iu, -\iu.
\end{equation}
Moreover, 
	\begin{equation}
	\label{t-i small design}
	 t^{(1)}(\z_{s})  = t^{(-1)}(\z_{s}) \qquad  \textnormal{and}\qquad t^{(\iu)}(\z_{s})= t^{(-\iu)}(\z_{s}).
	\end{equation}

\medskip
	
Again, we put $D=4L+1$ and divide the interval $[0,D]$ into the disjoint union 
	\begin{equation*}
		[0,D]=I^{(1)}\cup I^{(-1)}\cup I^{(\iu)}\cup I^{(-\iu)},
	\end{equation*}
with $|I^{(1)}|=L+1$ and $|I^{(-1)}|=|I^{(\iu)}|=|I^{(-\iu)}|=L$. Define the function $r:[0,D]\to \{\pm 1,\pm \iu \}$ as before
\begin{equation*}
	r(t)=s,\qquad t\in I^{(s)}.
\end{equation*}

For each $\z_{s}$ we further decompose each $I^{(r)}$ into disjoint unions:
\begin{equation*}
	I^{(r)}=\bigcup_{k=1}^{M}\left(I^{(r)}_{k}(\z_{s})\cup J^{(r)}_{k}(\z_{s})\right),\qquad r=\pm 1, \pm \iu
\end{equation*}
with 
\begin{equation*}
	|I_k^{(r)}(\z_s)|=c_{k}^{(r)} (\z_{s})\qquad\textnormal{and}\qquad	|J_k^{(r)}(\z_s)|=\frac{t^{(r)}(\z_{s})}{\go},\qquad 1\le k\le M.
\end{equation*}
Then we define the function $\W^{(\z_s)}_{Y^{(s)}}:[0,D]\to Y^{(s)}$ as
\begin{equation*}
	\W^{(\z_s)}_{Y^{(s)}}(t)=\y^{(s)}_k,\qquad t\in I^{(r)}_{k}(\z_s)\cup J^{(r)}_{k}(\z_s)
\end{equation*}
for all $1\le k\le M$ and $r\in \{\pm 1,\pm \iu\}.$

Suppose $U$ is a random variable uniformly distributed on $[0,D]$, then
\begin{equation}
	\label{eq:expectation of r small design}
	\bE[ r(U)] = \frac1{D},
\end{equation}
and
\begin{align*}
	\bE\left[r(U)\W^{(\z_s)}_{Y^{(s)}}(U)^{\bm\alpha_s}\right]=\frac{1}{D}\z_s^{\bm\alpha_s},\qquad \boldsymbol{z}_{s}\in \mathbf{D}^k, \boldsymbol{\alpha} _{s}\in \Lambda.
\end{align*}

Arguing as before, we may deduce that there exists a polynomial $p=p_{f,\bs z, \mathbf Y}$ of degree at most $d-1$ such that $p_{f,\bs z, \mathbf Y}(0)=0$ and the following holds. For any $m\ge d$, let $P:[n']\to [m]$ be constructed by choosing for each $i\in [n']$ uniformly at random $P(i)\in [m]$. We have
	\begin{equation}
				f(\z)=D^m\bE_{U, P}\left[\prod_{j=1}^{m}r(U_j)\cdot f\left(W_{\Y}^{(\z)}(U_P)\right)\right]+p_{f, \bs z, \mathbf Y}\left(\frac{1}{m}\right)\\
	\end{equation}
	where 
	\[W_{\Y}^{(\z)}(U_P):=\left(\W_{Y_{0}}^{(\z_0)}(U_{P(1)}),\dots, \W_{Y_{n'-1}}^{(\z_{n'-1})}(U_{P(n')})\right)\in \Y,
	\]
 $p_{f,\bs z, \mathbf{Y}}(0)=0$ and $p_{f,\bs z, \mathbf{Y}}$ is of degree at most $d-1$ as before.  

The rest of proof is exactly as before. 
Thus $\mathbf{Y}$ discretizes the uniform norm with constant at most 
\begin{equation}
    \label{eq CL}
    C d (4eD^2)^d\,.
\end{equation}
This completes the proof when $n$ is a multiple of $k$. 

In general, if $n'k<n< (n'+1)k\le n+k-1$, then we may project the sampling set from $\D^{(n' + 1)k}$ onto $\D^{n}$ so that there exists a sampling set $(Y_n)$ of size at most 
\begin{equation}
    \label{eq c'}
    |Y_n|\le (1+\varepsilon)^{n + k - 1} = C_1(d,\varepsilon) (1 + \varepsilon)^n\,.
\end{equation}
Finally, let us compute the precise dependence of $C_1(d,\epsilon)$ and $C_2(d,\go,\eta,\epsilon)$ on $\varepsilon$. 
The condition \eqref{eq k condition} on $k$ holds for all $d \geq 1$ and $0 < \varepsilon \leq 1/2$ if $k$ is the largest integer smaller than 
\[
    100 \frac{d}{\varepsilon} \log\left(\frac{d}{\varepsilon}\right)\,.
\]
This yields for $C_1(d,\varepsilon)$, using \eqref{eq c'}
\[
    C_1(d,\varepsilon) \leq (1 + \varepsilon)^{k} \leq \exp( 100 d \log\left(\frac{d}{\varepsilon}\right)) = \left(\frac{d}{\varepsilon}\right)^{100d}\,.
\]
For $C_2(d,\go,\eta,\varepsilon)$, we obtain with \eqref{eq M size}, \eqref{eq L def} and \eqref{eq CL} the bound
\[
    C_2(d,\go,\eta,\varepsilon)\leq \exp(C(d, \go, \eta) (\varepsilon^{-1} \log(\varepsilon^{-1}))^d)\,.
\]
The number $100$ in the exponent can be brought arbitrarily close to $1$, if $\varepsilon$ is assumed to be sufficiently small.
\end{proof}

\section{Necessity of exponential-cardinality sampling sets}
\label{sect:sub-exponential}

Now we prove Theorem \ref{thm:no subexp designs} which says sampling sets with cardinality sub-exponential in dimension cannot exist.

\begin{proof}[Proof of Theorem \ref{thm:no subexp designs}]
For any $\bm\eps=(\eps_1,\dots, \eps_n)\in \{-1,1\}^n$, consider the polynomials $f_{\bm\eps}(\x)=\sum_{j=1}^{n}\eps_jx_j$ on $\{-1,1\}^n$ of degree at most $1$. Then by definition, 
 \begin{equation*}
 n=\|f_{\bm\eps}\|_{\{\pm 1\}^n}\le C \|f_{\bm\eps}\|_{V_n}. 
 \end{equation*}
In other words, we have for all $\bm\eps\in \{-1,1\}^n$ that 
\begin{equation*}
\max_{\v=(v_1,\dots, v_n)\in V_n}\left|\sum_{j=1}^{n} v_j \eps_j\right|\ge 2\delta n\qquad \textnormal{with}\qquad \delta=\frac{1}{2C}\in (0,\infty).
\end{equation*}
So we have the inclusion $\{-1,1\}^n\subset \bigcup_{\v\in V_n}\Lambda_{\v}$, where 
\[\Lambda_{\v}:=\left\{\bm\eps\in \{-1,1\}^n:|f_{\v}(\bm\eps)|=\left|\sum_{j=1}^{n} v_j \eps_j\right|\ge 2\delta n\right\},\qquad \v\in V_n.\]
For each $\v\in V_n$ and i.i.d. Bernoulli random variables $\eps_1,\dots, \eps_n$, we have by Hoeffding's inequality that 
\begin{align*}
|\Lambda_{\v}|
&=2^n \textbf{Pr}\left[\left|\sum_{j=1}^{n} v_j \eps_j\right|\ge 2\delta n\right]\\
&\le 2^n \textbf{Pr}\left[\left|\sum_{j=1}^{n} \Re(v_j) \eps_j\right|\ge \delta n\right]+2^n \textbf{Pr}\left[\left|\sum_{j=1}^{n} \Im(v_j) \eps_j\right|\ge \delta n\right]\\
&\le 2^{n+1}\exp\left(-\frac{\delta^2 n^2}{2\|\a\|_2^2}\right)+2^{n+1}\exp\left(-\frac{\delta^2 n^2}{2\|\b\|_2^2}\right),
\end{align*}
where $\a=\Re \v$ and $\b=\Im \v$ are real vectors. Recalling that $\v\in V_n\subset \D^n$, we have $\|\a\|_2^2\le n$ and $\|\b\|_2^2\le n$. Therefore, 
\begin{equation*}
|\Lambda_{\v}|\le 2^{n+2}\exp\left(-\delta^2 n/2\right)=4 \left(2e^{-\delta^2/2}\right)^n.
\end{equation*}
All combined, we just proved
\begin{equation*}
2^n=|\{-1,1\}^n|\le \sum_{\v\in V_n}|\Lambda_{\v}|\le 4 |V_n| \left(2e^{-\delta^2/2}\right)^n.
\end{equation*}
This gives the bound 
\begin{equation*}
|V_n| \ge \frac{1}{4}e^{\frac{\delta^2 n}{2}},
\end{equation*}
as desired. 
\end{proof}

\section{ Discretizations of \texorpdfstring{$L^p$}{Lp} norms}
\label{sect:lp}
In this section, we prove Theorem \ref{thm:p norm design} about an $L^p$ version of dimension-free discretizations for products of cyclic groups.

\begin{proof}[Proof of Theorem \ref{thm:p norm design}]
	Write $Y_n=\prod_{j=1}^{n}Z_j$ with $Z_j=\Omega_\go$ and $\y=(1,\omega,\dots, \omega^{\go-1})$ with $\omega=e^{\frac{2\pi \iu}{\go}}.$ According to \eqref{eq:f a_j}, we have for all $\x=(x_1,\dots, x_n)\in \T^n$ that
	\begin{equation*}
	f(\x)=\sum_{j=0}^{d-1}a_{j}D^{m_j}\bE_{U, P}\left[\prod_{\ell=1}^{m_j}r(U_\ell)\cdot f\left(W_{Y_n}^{(\x)}(U_P)\right)\right]
	\end{equation*}
	where $a_j, m_j, 0\le j\le d-1$ are as defined in subsection \ref{subsect:partial} and
	\[W_{Y_n}^{(\x)}(U_P):=\left(\W_{\y}^{(x_1)}(U_{P(1)}),\dots, \W^{(x_n)}_{\y}(U_{P(n)})\right)\in Y_n=\Omega_\go^n.\]
	Again, we abuse the notation $W_{Y_n}^{(\x)}$ which actually depends on some order of each $Z_j$ as specified in its definition. See subsection \ref{non-direct-pr}.
	For any $\bs\xi=(\xi_1,\dots, \xi_n) \in \Om_\go^n$, denote $\x \bs\xi=(x_1\xi_1,\dots, x_n \xi_n)$ the multiplication of $\x$ and $\bs\xi$ as elements in the group $\T^n$. Then  $\x \bs\xi\in \T^n$ and 
	\begin{equation*}
	f(\x  \bs\xi)=\sum_{j=0}^{d-1}a_{j}D^{m_j}\bE_{U, P}\left[\prod_{\ell=1}^{m_j}r(U_\ell)\cdot f\left(W_{Y_n}^{(\x \bs\xi)}(U_P)\right)\right].
	\end{equation*}
	For any $0\le j\le \go-1$, recall that $\omega^{(x_j \xi_j)}_{\y}$ takes values in $\Omega_\go$ with
	\begin{align*}
	\Pr\left[\omega^{(x_j \xi_j)}_{\y}(U)=\omega^k\right]
	&= \frac{1}{D}\sum_{s=\pm 1, \pm \iu}\Big(c_k^{(s)}(\y;x_j \xi_j)+\frac{t^{(s)}(\y;x_j\xi_j)}{\go}\Big)\\
	&= \frac{1}{\go}+\frac{1}{D}\Big\{\sum_{s= \pm 1, \pm \iu}c_k^{(s)}(\y;x_j \xi_j)-\frac{1}{\go}\sum_{\ell=0}^{\go-1}c^{(s)}_\ell(\y;x_j \xi_j)\Big\},
	\end{align*}
    where we used \eqref{Ct-i0} and \eqref{Ct-i}.
Also, $(c_k(\y;x_j\xi_j))_k$ is uniquely determined by 
\begin{equation*}
\sum_{k=0}^{\go-1}c_k(\y;x_j\xi_j) \omega^{k\ell}=(x_j\xi_j)^\ell, \qquad 0\le \ell\le \go-1,
\end{equation*}
or equivalently
\begin{equation*}
\sum_{k=0}^{\go-1}c_k(\y;x_j\xi_j) (\omega^k\overline{\xi_j})^{\ell}=x_j^\ell, \qquad 0\le \ell\le \go-1.
\end{equation*}
So $c_k(\y;x_j\xi_j)=c_k(\y_j;x_j)$ with 
$\y_j=(\overline{\xi_j},\overline{\xi_j}\omega,\dots, \overline{\xi_j}\omega^{\go-1})$ that is the same as $\Omega_\go$ as a set. Here we used the group structure of $\Omega_\go$. Therefore, 
	\begin{align*}
	\Pr\left[\omega^{(x_j \xi_j)}_{\y}(U)=\omega^k\right]
	&=\frac{1}{\go}+\frac{1}{D}\Big\{\sum_{s= \pm 1, \pm \iu}c_k^{(s)}(\y;x_j \xi_j)-\frac{1}{\go}\sum_{\ell=0}^{\go-1}c^{(s)}_\ell(\y;x_j \xi_j)\Big\}\\
	&=\frac{1}{\go}+\frac{1}{D}\Big\{\sum_{s= \pm 1, \pm \iu}c_k^{(s)}(\y_j;x_j)-\frac{1}{\go}\sum_{\ell=0}^{\go-1}c^{(s)}_\ell(\y_j;x_j)\Big\}\\
	&=\Pr\left[\omega^{(x_j)}_{\y_j}(U)=\overline{\xi_j}\omega^k\right].
	\end{align*}
So we just argued that $\omega^{(x_j \xi_j)}_{\y}(U)=\xi_j\omega_{\y_j}^{(x_j)}(U)$. Thus we find 
	\begin{align*}
	f(\x  \bs\xi)
	&=\sum_{j=0}^{d-1}a_{j}D^{m_j}\bE_{U, P}\left[\prod_{\ell=1}^{m_j}r(U_\ell)\cdot f\left(W_{Y_n}^{(\x \bs\xi)}(U_P)\right)\right]\\
	&=\sum_{j=0}^{d-1}a_{j}D^{m_j}\bE_{U, P}\left[\prod_{\ell=1}^{m_j}r(U_\ell)\cdot f\left(\bm\xi W_{Y_n'}^{(\x)}(U_P)\right)\right]
	\end{align*}
	where $Y_n'=Y_n$ with a different ordering of each $Z_j$:
	\[W_{Y_n'}^{(\x)}(U_P)=\left(\W_{\y_1}^{(x_1)}(U_{P(1)}),\dots, \W^{(x_n)}_{\y_n}(U_{P(n)})\right)\in \Omega_\go^n.\]
So by Jensen's inequality, and recalling \eqref{ineq:estimate of a_j D} and \eqref{ineq:improved D},
\begin{align*}
	|f(\x  \bs\xi)|^p
	&\le d^{p-1}\sum_{j=0}^{d-1}|a_{j}D^{m_j}|^p \left|\bE_{U, P}\left[\prod_{\ell=1}^{m_j}r(U_\ell)\cdot f\left(\bm\xi W_{Y_n'}^{(\x)}(U_P)\right)\right]\right|^p\\
	&\le d^{p-1}\sum_{j=0}^{d-1}|a_{j}D^{m_j}|^p \bE_{U, P}\left|f\left(\bm\xi W_{Y_n'}^{(\x)}(U_P)\right)\right|^p\\
	&\le d^{p-1}(C_1 \log(\go)+C_2)^{dp}\bE_{U, P}\left|f\left(\bm\xi W_{Y_n'}^{(\x)}(U_P)\right)\right|^p.
	\end{align*}
	Since $W_{Y_n'}^{(\x)}(U_P)$ takes values in $\Omega_\go^n$ and $\Omega_\go^n$ is a group, we have 
	\[
	\bE_{\bm\xi\sim \Omega_\go^n}\left|f\left(\bm\xi W_{Y_n'}^{(\x)}(U_P)\right)\right|^p=\|f\|_{L^p(\Omega_\go^n)}^p.
	\]
	Similarly, for any $\bm\xi\in \Omega_\go^n\subset \T^n$, we have 
	\[
\bE_{\x\sim\T^n}|f(\x  \bs\xi)|^p=\|f\|_{L^p(\T^n)}^p.\]
	All combined, we conclude 
	\begin{align*}
	\|f\|_{L^p(\T^n)}^p
	&=\bE_{\bm\xi\sim \Omega_\go^n}\bE_{\x\sim\T^n}|f(\x  \bs\xi)|^p\\
		&\le d^{p-1}(C_1 \log(\go)+C_2)^{dp}\bE_{U, P}\bE_{\x\sim\T^n}\bE_{\bm\xi\sim \Omega_\go^n}\left|f\left(\bm\xi W_{Y_n'}^{(\x)}(U_P)\right)\right|^p\\
		&= d^{p-1}(C_1 \log(\go)+C_2)^{dp}\|f\|_{L^p(\Omega_\go^n)}^p.
	\end{align*}
	This finishes the proof of the theorem. 
	\end{proof}

\printbibliography
 
\end{document}